\title{Augmentations and link group representations}
\author{Honghao Gao}
\address{Department of Mathematics, Michigan State University, 619 Red Cedar Road, East Lansing, MI 48824, USA}
\email{gaohongh@msu.edu}
\numberwithin{equation}{section}
\numberwithin{figure}{section}
\theoremstyle{plain}
\newtheorem{thm}{Theorem}[section]
\newtheorem{lem}[thm]{Lemma}
\newtheorem{prop}[thm]{Proposition}
\newtheorem{thm-defn}[thm]{Theorem-Definition}
\newtheorem{prop-def}[thm]{Proposition-Definition}
\theoremstyle{definition}
\newtheorem{defn}[thm]{Definition}
\newtheorem{eg}[thm]{Example}
\theoremstyle{remark}
\newtheorem{rmk}[thm]{Remark}
\newcommand{\bbC}{{\mathbb{C}}}
\newcommand{\bbN}{{\mathbb{N}}}
\newcommand{\bbR}{{\mathbb{R}}}
\newcommand{\bbZ}{{\mathbb{Z}}}
\newcommand{\cA}{{\mathcal{A}}}
\newcommand{\cC}{{\mathcal{C}}}
\newcommand{\cE}{{\mathcal{E}}}
\newcommand{\cF}{{\mathcal{F}}}
\newcommand{\cG}{{\mathcal{G}}}
\newcommand{\cI}{{\mathcal{I}}}
\newcommand{\cP}{{\mathcal{P}}}
\newcommand{\del}{{\partial}}
\newcommand{\la}{{\langle}}
\newcommand{\ra}{{\rangle}}
\tikzset{node distance=1.5cm, auto}
\begin{document}
\maketitle

\begin{abstract}
We construct the augmentation representation. It is a representation of the fundamental group of the link complement associated to an augmentation of the framed cord algebra. This construction connects representations of two link invariants of different types. We also study properties of the augmentation representation.
\end{abstract}

\section{Introduction}
A link is a disjoint union of simple closed curves. A link invariant is an algebraic construction associated to links which is well-defined within each isotopy class. Research on link invariants serves the goal of not only distinguishing links, but also understanding fundamental properties of links and related subjects.

Link invariants appear in various guises. Considering the complement space of a link and taking its fundamental group, we get a group as a link invariant which is known as the link group. In a more complicated form, another algebraic construction we study in this paper is the framed cord algebra, which is a non-commutative algebra generated by paths beginning and ending on the framing longitudes of the link.

They are powerful invariants. In the case of knots, both invariants in their further enhanced forms become complete invariants \cite{W, ENS}, meaning that distinct isotopy classes result in non-isomorphic invariants. However, robust invariants can be impractical to distinguish knots or links, because sometimes it is difficult to show that two groups or two non-commutative algebras are not isomorphic. 

A possible trade-off is to study more computable invariants, such as representations of a group or an algebra. An augmentation is a rank one representation of the framed cord algebra. We want to understand augmentations in terms of representations of the link group, since a group appears simpler than a non-commutative algebra. We approach this goal by constructing the \textit{augmentation representation}.

\begin{thm} Let $L$ be an oriented link with its Seifert framing. Let $\mathrm{Cord}(L)$ be the framed cord algebra and $\pi_L$ be the link group. 

(Theorem-Definition \ref{MainConstruction}) Let $\epsilon: \mathrm{Cord}(L)\rightarrow k$ be an augmentation of the framed cord algebra. By writing $L$ as a braid closure, we construct a representation of the link group $$\rho_\epsilon: \pi_L\rightarrow GL(V_\epsilon).$$

(Theorem \ref{MainThm}) Up to isomorphism, $(\rho_\epsilon,V_\epsilon)$ is well-defined for the augmentation $\epsilon$. In particular, it does not depend on the choice of the braid in the construction.
\end{thm}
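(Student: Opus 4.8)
The plan is to reduce the claim to Markov's theorem. Since the link group $\pi_K$ and the framed cord algebra $\textrm{Cord}(K)$ are isotopy invariants, the representation produced in Theorem-Definition \ref{MainConstruction} from a braid $\beta$ with closure $K$ should be compared, for two different braids $\beta,\beta'$, through the canonical identifications of the associated link groups together with the compatibility of $\epsilon$ with the induced isomorphisms of cord algebras. By Markov's theorem any two braids with isotopic oriented closure are related by a finite sequence of conjugations $\beta\mapsto\alpha\beta\alpha^{-1}$ within a fixed $B_n$ and (de)stabilizations $\beta\mapsto\beta\sigma_n^{\pm1}\in B_{n+1}$. Because ``isomorphic'' is transitive, it therefore suffices to produce, for a single conjugation and a single stabilization of each sign, an intertwiner $T$ satisfying $T\,\rho_\epsilon^{\beta}(\gamma)\,T^{-1}=\rho_\epsilon^{\beta'}(\gamma)$ for all $\gamma\in\pi_K$.

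I would dispose of the conjugation move first, as it is the more transparent one. Writing $\pi_K$ from the braid in the standard way, the meridian generators are indexed by the strands meeting a bounding disk and the relations are governed by the Artin action $\beta_*$. Conjugating $\beta$ by a generator $\sigma_i$ conjugates $\beta_*$ and hence merely relabels the generators and the cord data entering the construction; the induced linear map on the representation space is the corresponding change of basis, and verifying that it intertwines the two representations reduces to the braid relations together with the defining relations of $\textrm{Cord}(K)$ evaluated under $\epsilon$. As the $\sigma_i$ generate all of $B_n$, conjugation invariance follows.

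The stabilization move is where I expect the real difficulty. Passing from $\beta\in B_n$ to $\beta\sigma_n^{\pm1}\in B_{n+1}$ leaves $\pi_K$ unchanged, since the new strand contributes one meridian generator together with one relation expressing it in terms of the others; the task is to see this redundancy on the representation side, where the data read off from the $(n+1)$-strand braid a priori enlarges the ambient space. The key computation is the matrix of the stabilizing generator $\rho_\epsilon(\sigma_n^{\pm1})$ in the enlarged basis: using that a stabilization introduces only the single stabilizing cord, whose value is forced by the cord-algebra relations once $\epsilon$ is fixed, I expect this matrix to be block triangular with the extra block a scalar determined by $\epsilon$. The intertwiner $T$ is then the change of basis that splits off this extra coordinate, identifying the enlarged representation with $(\rho_\epsilon^{\beta},V_\epsilon)$. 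The positive and negative stabilizations are treated symmetrically, differing only by inverting the relevant augmentation value, and the Seifert-framing normalization is what makes both signs compatible with the same intrinsic $\textrm{Cord}(K)$. Composing the intertwiners along a Markov sequence joining any two braids with closure $K$ then yields the desired isomorphism, proving that $(\rho_\epsilon,V_\epsilon)$ depends only on $\epsilon$.
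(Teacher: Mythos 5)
Your overall skeleton --- invoke Markov's theorem and exhibit an explicit intertwiner for a single conjugation and a single stabilization of each sign --- is exactly the paper's strategy, but your treatment of the conjugation move contains a genuine gap, and your assessment of where the difficulty lies is inverted. Conjugating $B$ by $\sigma_s$ does \emph{not} merely relabel the generators and the cord data. Reading the standard cords off the new configuration disk and pushing them back through the half twist, only some of them are relabelings of old standard cords (e.g. $\tilde{\gamma}_{i,s+1}=\gamma_{is}$); others are genuinely different elements of $\textrm{Cord}(K)$, obtained by interpolating with meridian cords, e.g. $\tilde{\gamma}_{is}=\gamma_{is}\cdot m_s^{(s)}\cdot\gamma_{s,s+1}$ and $\tilde{\gamma}_{sj}=\gamma_{s+1,s}\cdot(m_s^{(s)})^{-1}\cdot\gamma_{sj}$. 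Consequently the new matrix $\tilde{R}$ is not a permutation of $R$ but satisfies $\tilde{R}=M_L R M_R$ for row/column operation matrices whose entries are themselves augmented values (this is already needed just to see $\dim\tilde{V}=\dim V$), and the intertwiner is not the permutation induced by relabeling: in the paper it is $T(R_s)=\tilde{R}_{s+1}$, $T(R_{s+1})=\tilde{R}_s+\epsilon(\gamma_{s,s+1})\tilde{R}_{s+1}$, and the correction term $\epsilon(\gamma_{s,s+1})\tilde{R}_{s+1}$ is essential --- the bare swap fails to intertwine. Moreover the meridian generators themselves get conjugated, $\tilde{m}_s=m_s m_{s+1}m_s^{-1}$, hence $m_{s+1}=\tilde{m}_{s+1}^{-1}\tilde{m}_s\tilde{m}_{s+1}$, so checking $\tilde{\rho}(m_{s+1})\circ T=T\circ\rho(m_{s+1})$ means verifying an identity for a length-three word in the new generators; this is the paper's Case (C), which occupies the bulk of its proof and uses the skein relation repeatedly. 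None of this follows formally from ``braid relations plus the defining relations of $\textrm{Cord}(K)$''; it is the actual content of the theorem, and your sketch leaves it entirely open.

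By contrast, stabilization, which you flag as the real difficulty, is the short case in the paper: the new strand contributes a column $\tilde{R}_{n+1}$ proportional to $\tilde{R}_n$ (ratio $\mu_{\{n\}}^{\pm1}$), so the span of the columns does not grow, and the intertwiner is essentially the identity with one index shifted; your block-triangularity intuition is correct there. One small type error: $\rho_\epsilon(\sigma_n^{\pm1})$ is not defined, since $\sigma_n$ is an element of the braid group rather than of $\pi_K$; the objects to analyze are the new meridian $m_{n+1}$ and the new column of $\tilde{R}$.
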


The slogan of the construction is ``action by interpolation''. Placing the link as the closure of a braid, we can select a set of standard cords and arrange their augmented values into a square matrix. The column vectors of the matrix span a vector space which is the underlying space of the augmentation representation. A based loop acts on an entry of the matrix by interpolating a standard cord with the based loop, see Figure \ref{Fig:Interpolation}.

\begin{figure}[h]
	\centering
	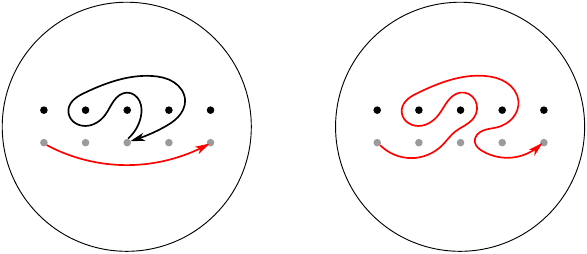
	\caption[]{Action by interpolation. In this example, we start on the left with a based loop (black) in the fundamental group and a standard cord (red). After interpolation we get another framed cord on the right. The action of the based loop takes the augmented value of the standard cord to the augmented value of the interpolated cord.}
	\label{Fig:Interpolation}
\end{figure}

In the next part of the paper, we study some properties of the augmentation representation.

\begin{prop} Let $L$ be an oriented link with its Seifert framing, and $\epsilon: \mathrm{Cord}(L)\rightarrow k$ an augmentation of the framed cord algebra. The augmentation representation $(\rho_\epsilon,V_\epsilon)$ satisfies the following properties.
\begin{itemize}
\item
(Proposition \ref{propSimpleness}) Microlocal simpleness.

For any meridian $m$, there is a $\mathrm{codim}$ $1$ subspace $W\subset V_\epsilon$ such that $\rho_\epsilon(m)|_W = \mathrm{id}_{W}$.

\item
(Proposition \ref{propVanishing}) Vanishing.

Suppose $L_0\subset L$ is a sublink. If $\epsilon(\gamma) = 0$ for either (1) every framed cord $\gamma$ starting from $L_0$, or (2) every framed cord $\gamma$ ending on $L_0$, then $\rho(m_0) = \mathrm{id}_{V_\epsilon}$ for any meridian $m_0$ of $L_0$.
\item 
(Proposition \ref{separability}) Separability.

Suppose $L = L_1\sqcup L_2$ is the union of two sublinks. If $\epsilon(\gamma) = 0$ for all mixed cords between $L_1$ and $L_2$, then $(\rho_\epsilon, V_\epsilon)$ is a direct sum of two representations $(\rho_1, V_1)$ and $(\rho_2, V_2)$, where each $(\rho_i,V_i)$ is a representation of $\pi_{L_i}$.
\end{itemize}
\end{prop}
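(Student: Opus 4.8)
The plan is to reduce all three properties to one explicit computation: the matrix form of a meridian in the interpolation model of Theorem-Definition~\ref{MainConstruction}. Recall that, writing $K$ as a braid closure on $n$ strands, one records the augmented values of the standard cords $\gamma_{ij}$ into a square matrix $A = (\epsilon(\gamma_{ij}))$, and $V_\epsilon$ is the span of the columns of $A$. The central step is to unwind Figure~\ref{Fig:Interpolation} into linear algebra: I would show that interpolating a standard cord with a meridian $m$ of a given strand inserts the loop $m$ at exactly one endpoint, and that the resulting change of augmented values is governed by the cord relation at that strand. Concretely, this should express $\rho_\epsilon(m)$ as the identity plus a single elementary modification supported on the row and column indexed by that strand, of the form $\rho_\epsilon(m) = \textrm{id} + u\,\xi^{\top}$ for a vector $u$ and a covector $\xi$ read off from $A$. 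Granting this normal form, microlocal simpleness is immediate: $\rho_\epsilon(m) - \textrm{id} = u\,\xi^{\top}$ has rank at most one, so $W := \ker(\rho_\epsilon(m)-\textrm{id})$ has codimension at most one and $\rho_\epsilon(m)|_W = \textrm{id}_W$. A general meridian is conjugate to a standard generator, and conjugation preserves this form (carrying $W$ to $\rho_\epsilon(g)W$), so the conclusion holds for every meridian.

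For the vanishing property I would trace which entries of $A$ enter the two factors of the rank-one term attached to a meridian $m'$ of the sublink $K'$. The point is that $u$ is assembled from the values $\epsilon(\gamma)$ over cords $\gamma$ \emph{starting} on $K'$, while $\xi$ is assembled from the values over cords \emph{ending} on $K'$ (or the reverse, according to at which endpoint the loop is inserted). Thus hypothesis (1) forces $u = 0$, hypothesis (2) forces $\xi = 0$, and in either case $u\,\xi^{\top} = 0$, so $\rho_\epsilon(m') = \textrm{id}$. Since $\rho_\epsilon$ is a homomorphism and every meridian of $K'$ is conjugate to a standard generator of $K'$, the identity $\rho_\epsilon(m') = \textrm{id}_{V_\epsilon}$ then holds for all meridians $m'$ of $K'$, not merely the chosen ones.

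Separability combines the two previous points. If $\epsilon$ kills all mixed cords, then ordering the strands so that those of $K_1$ precede those of $K_2$ makes $A$ block diagonal, whence $V_\epsilon = V_1 \oplus V_2$ with $V_i$ the column span of the $K_i$-block. Applying the vanishing mechanism to each family of mixed cords shows that every meridian of $K_2$ acts as the identity on $V_1$ and every meridian of $K_1$ acts as the identity on $V_2$; hence each generator of $\pi_K$ preserves the splitting and $\rho_\epsilon = \rho_1 \oplus \rho_2$. It remains to verify that $\rho_i$ descends along the natural surjection $\pi_K \twoheadrightarrow \pi_{K_i}$ induced by the inclusion $S^3\setminus K \hookrightarrow S^3\setminus K_i$ (filling in the other components). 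This surjection has kernel the normal closure of the meridians of $K_j$ ($j\neq i$); every such meridian, being conjugate to a generator that acts trivially on $V_i$, acts trivially on $V_i$, so the whole normal closure does, and $\rho_i$ factors as a representation of $\pi_{K_i}$.

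The main obstacle is the first step: pinning down the precise rank-one normal form of the meridian action from the interpolation picture, which requires a careful accounting of how the cord relation at a strand converts the geometric insertion of a meridian into an elementary matrix operation, and in particular which augmented cord values populate $u$ versus $\xi$. Once this normal form and its start/end bookkeeping are secured, microlocal simpleness is a one-line rank count, vanishing is the statement that one factor dies, and separability is the block-diagonal consequence together with the standard computation of $\ker(\pi_K \twoheadrightarrow \pi_{K_i})$.
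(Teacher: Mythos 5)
Your proposal is correct, and for the first two items it takes essentially the paper's route: your rank-one normal form is exactly the paper's Lemma \ref{meridanRj}, namely $\rho_\epsilon(m_t)R_j = R_j - \epsilon(\gamma_{tj})R_t$ and $\rho_\epsilon(m_t^{-1})R_j = R_j + \mu_{\{t\}}^{-1}\epsilon(\gamma_{tj})R_t$, proved by one application of the skein relation after rewriting $m_t = p_t\cdot m_t^{(t)}\cdot p_t^{-1}$. This also settles the bookkeeping you hedged on: the column factor $u = R_t$ has entries $\epsilon(\gamma_{\alpha t})$, values of cords \emph{ending} on the strand $t$, while the covector $\xi_j = \epsilon(\gamma_{tj})$ runs over cords \emph{starting} on it; so hypothesis (1) of the vanishing statement kills $\xi$ and hypothesis (2) kills $u$, exactly as in the paper's proof of Proposition \ref{propVanishing}, which, like you, first sorts strands via Lemma \ref{sortingLem} and then propagates to all meridians by conjugacy. (The paper also explicitly checks that $\rho_\epsilon(m)^{-1}$ is almost identity; you leave this implicit, but it follows from your normal form since the inverse of $\textrm{id}+u\,\xi^{\top}$ is again a rank-one perturbation of the identity.) So your announced ``main obstacle'' is not an obstacle at all --- it is precisely the already-established Lemma \ref{meridanRj}, an input you are entitled to since the statement presupposes Theorem-Definition \ref{MainConstruction}.

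Where you genuinely diverge is separability, and your route is sound. The paper builds $(\rho_1,V_1)$ from scratch: it forms the $s\times s$ matrix of augmented pure cords of $K_1$, defines $\rho_1(\gamma)\tilde{R}_j = \epsilon(p_\alpha^{-1}\cdot\gamma\cdot p_j)$ for $\gamma\in\pi_{K_1}$ --- which implicitly uses that $\epsilon$ factors through $\textrm{Cord}(K)\rightarrow\textrm{Cord}(K_1)$, with skein relations now taken in $\textrm{Cord}(K_1)$, two subtleties the paper flags --- and then verifies that $\varphi(\tilde{R}_j) = R_j$ intertwines the actions. You instead note that $R$ is block diagonal, check on the generators $m_t^{\pm1}$ that each block span is invariant with the complementary family of meridians acting as the identity, and then descend along the surjection $\pi_K\rightarrow\pi_{K_i}$ using the standard van Kampen fact that its kernel is the normal closure of the meridians of the other sublink, each of which acts trivially by conjugation-invariance. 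Your argument is shorter and never has to re-verify the representation property for a new construction; what the paper's construction buys in exchange is the finer conclusion recorded in its subsequent remark, that $(\rho_i,V_i)$ is identified with the augmentation representation of the induced augmentation $\epsilon_i:\textrm{Cord}(K_i)\rightarrow k$, which your restriction argument does not directly produce --- though it is not needed for the proposition as stated.
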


These properties can be reinterpreted in terms of microlocal sheaf theory. In microlocal theory, one considers the ``micro-support'' in the cotangent bundle, generalizing the usual notion of the support of a sheaf. The augmentation representation is defined over the link group, which is equivalent to a locally constant sheaf on the link complement. Taking the underived push-forward, we obtain a sheaf $\cF$ microsupported within the conormal bundle of the link.

The three properties of $(\rho_\epsilon,V_\epsilon)$ can be rephrased in terms of $\cF$. The first property, $(\rho_\epsilon,V_\epsilon)$ being microlocally simple, is equivalent to say that $\cF$ is a simple sheaf along its micro-support (in the convention of \cite{KS}), or microlocal rank $1$ along its micro-support (in the convention of \cite{STZ}). The vanishing property states a sufficient condition when $\cF$ is microsupported outside the conormal of the sublink $L_0$. The separability states a sufficient condition when $\cF$ splits into two subsheaves, where each subsheaf is microsupported along a sublink.

\smallskip
The definition and properties of the augmentation representation outreach to several directions. First, the framed cord algebra can be thought of as being generated by a subcategory of the fundamental groupoid of the link complement, and one expects from functoriality that representations should be pulled back in some sense. The first such construction appeared in \cite{Co}, where $L=K$ is a knot and $\epsilon$ admits a mild restriction. Cornwell showed that these augmentations are bijective to irreducible ``KCH'' representations. When $L$ is a link, the framed cord algebra is no longer generated by $\pi_L$, and must take a more general form. The current construction works more generally, including mixed cords arising from links.

A character variety is a moduli space of representations of a finitely generated group. The character variety of the link group is a space of flat bundles on the link complement, and it plays an important role in knot theory and three manifolds. The $SL(2)$-character variety has been intensively studied. We mention \cite{CS,KM} and the $A$-polynomial discussed in the next paragraph. The $SL(n)$-character variety in higher ranks becomes increasingly complicated \cite{AH, GTZ, GW, HMP, MP}.  Our results show that the ``full augmentation variety'', which is the moduli space of augmentations, cuts off a closed subvariety in the link group character variety. It suggests a direction of future research to understand the subvariety characterized by microlocally simple representations.

Introduced by Cooper et al. \cite{CCGLS}, the $A$-polynomial of a knot defines a complex plane curves (which we call the ``$A$-variety'' for now) which parametrizes the $SL(2)$-character variety projected to a torus determined by the peripheral subgroup. It can be used to detect knots \cite{DG, BZ, NiZh}. The $A$-polynomial is closely related to the augmentation polynomial, whose vanishing locus is the moduli space of augmentations projected to the same torus, or the ``augmentation variety''. When $L=K$ is a knot, both polynomials have two variables, denoted as $A_{K}(\lambda,\mu)$ and $Aug_K(\lambda,\mu)$. Ng proved that $A_{K}(\lambda,\mu)$ divides $Aug_K(\lambda,\mu^2)$ \cite{Ng3}. It is conjectured in \cite{AENV} that the augmentation polynomial with a larger coefficient ring is a generalization of the $A$-polynomial, and it produces a new notion of mirror symmetry. To see the relation between two polynomials using our result, augmentation representations are microlocally simple, and a generic rank $2$ microlocally simple representation is in one-to-one correspondence with $SL(2)$-representations.  When $L$ is a link, we no long have polynomials since the ideals of vanishing function are not principal, but a similar result holds for the same reason --- the $A$-variety is a closed subvariety of the augmentation variety.

Finally the augmentation representation builds up to the correspondence between augmentations and sheaves for links. The correspondence was proven for Legendrian links \cite{NRSSZ} and connected Legendrian surfaces defined from knot conormals \cite{Ga2} or cubic graphs (by Sackel in appendix of \cite{CM}). For a brief motivation, augmentations of a framed cord algebra correspond to augmentations of the dga for the conormal tori \cite{Ng3}, whose geometric counterparts are Lagrangian fillings in the sense of SFT \cite{El, EGH}. Fillings are sheaves through microlocalization \cite{Na, NZ}, and further determine link group representations via the Radon transform \cite{Ga1}. For knots, the correspondence in both ways are constructed. For links, the construction is more involved because of mixed cords. In this paper, we focus on the geometric origin of the theory and construct the augmentation representation. As explained in Figure \ref{Fig:Interpolation}, the action of the link group has a geometric meaning, namely based loops act by their interpolations in framed cords. This construction makes a direct connection from augmentations of a framed cord algebra to simple sheaves. 

This paper is organized in a simple fashion. We construct the augmentation representation is Section \ref{Sec:AugRep} and study its properties in Section \ref{Sec:Properties}. Section \ref{Sec:micro} is a short review of the microlocal sheaf theory.

\noindent\textbf{Acknowledgements.} We thank Lenhard Ng, Stéphane Guillermou, Eric Zaslow for helpful discussions and valuable comments. We thank the referee for important suggestions. This work is supported by ANR-15-CE40-0007 ``MICROLOCAL''.

\section{Augmentation representation}\label{Sec:AugRep}
The goal of this section is to construct the augmentation representation. It is a link group representation associated to an augmentation. We first introduce some preliminary concepts, including the framed cord algebra and its augmentations, and then construct the augmentation representation.

Let $k$ be a commutative field. It is the ground field where we will define augmentations and group representations. Throughout the paper, we set $X= \bbR^3$ or $S^3$. Let $L$ be an oriented $r$-component link in $X$. We label its components as 
$$L = K_1\sqcup K_2\sqcup  \dotsb \sqcup K_r.$$

Suppose $p:[0,1]\rightarrow X$ is a path or a loop, we write $p^{-1}$ for the reversed path: 
$$p^{-1}(t) = p(1-t).$$
We denote by $p_1\cdot p_2$ the concatenation of two composable paths. Namely if $p_1(1) = p_2(0)$, we define
$$p_1\cdot p_2(t)  = 
\begin{cases}
p_1(2t), & 0\leq t\leq 1/2, \\
p_2(2t-1), & 1/2\leq t\leq 1.
\end{cases}
$$
We work with paths up to homotopy, hence the concatenation induces an associative product.

\subsection{Braids}\label{Sec:Braid}
To construct an augmentation representation, we need to represent the link as the closure of a braid. A braid can be expressed as an element in Artin's braid group of $n$ strands:
$$Br_n  = \la\, \sigma_1^{\pm 1},\dotsb, \sigma_{n-1}^{\pm 1} \,|\, \sigma_i\sigma_{i+1}\sigma_i = \sigma_{i+1}\sigma_i\sigma_{i+1}, \textrm{ and } \sigma_j\sigma_k = \sigma_k\sigma_j \textrm{ for } |j-k|\geq 2\,\ra.$$

Geometrically, a braid is a collection of strands in a solid cylinder, with endpoints fixed on boundary disks. Braids can be realized as a mapping class group. Let $D$ be an oriented disk with sufficiently large radius, equipped with two-tuple coordinates $(-,-)$. Let $y_1 = (1,0),\dotsb, y_n = (n,0)$ be $n$ marked points, and $D^\circ \subset D$ be the $n$-punctured disk with marked points removed. The braid group $Br_n$ is isomorphic to the mapping class group $\textrm{MCG}(D^\circ) := \pi_0(\textrm{Diff}\,^+(D^\circ))$, where $\textrm{Diff}\,^+(D^\circ)$ is the topological group of orientation preserving diffeomorphisms of $D^\circ$. For each $[h]\in \textrm{MCG}(D^\circ)$, we can extend $h$ to a homeomorphism $\tilde h: D\rightarrow D$. Let $H: D\times [0,1]\rightarrow D$ be a $C^0$ isotopy between $\textrm{id}_D$ and $h$, namely $H(-,0) = \textrm{id}_D$, $H(-,1) = h$, and $H(-,t): D\rightarrow D$ is a homeomorphism for each $t\in [0,1]$. Then $H^{-1}(\{y_1,\dotsb,y_n\})\subset D\times [0,1]$ is the braid associated to $[h]\in \textrm{MCG}(D^\circ) \cong Br_n$.

\begin{eg}\label{Whiteheadbraid} In Figure \ref{Fig:Whiteheadbraid}, we plot a $3$-strand braid whose braid word is
$$\sigma_1^2\sigma_2^2\sigma_1^{-1}\sigma_2^{-2}.$$
The closure of the braid is the two-component Whitehead link.
\begin{figure}[h]
	\centering
	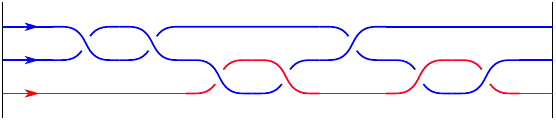
	\caption[]{A braid whose closure is the Whitehead link.}
	\label{Fig:Whiteheadbraid}
\end{figure}
\end{eg}
The \textit{configuration disk} $D$ with $n$ marked points will be extensively used in the upcoming constructions. In practice, one can plot the figure in a more symmetric fashion. We keep the coordinates for an easier description.

Given a braid $B \in D\times [0,1]$, one can close the solid torus by identifying $D\times \{0\}$ and $D\times \{1\}$. Since the marked points are fixed on the boundaries, the braid can be closed to a one dimensional compact submanifold $\la B \ra$ in the solid torus. If we take a sectional disk, in particular at the place where we glue, we obtain the configuration disk where the marked points equal to $D\cap \la B \ra$. If the solid torus is further embedded in $X = \bbR^3$ or $S^3$ as the tubular neighborhood of an unknot with $0$ framing, we obtain a link $L  = \la B \ra\subset X$. A braid carries a natural orientation induced from the orientation of $[0,1]$. Hence the braid closure is naturally an orientated link. Alexander's theorem asserts that every oriented link can be expressed (not in a unique way) as a braid closure. See \cite{Al} for the original construction, or \cite{Ya,Vo} for the improved Yamada-Vogel algorithm.

\begin{figure}[h]
	\centering
\begingroup%
  \makeatletter%
  \providecommand\color[2][]{%
    \errmessage{(Inkscape) Color is used for the text in Inkscape, but the package 'color.sty' is not loaded}%
    \renewcommand\color[2][]{}%
  }%
  \providecommand\transparent[1]{%
    \errmessage{(Inkscape) Transparency is used (non-zero) for the text in Inkscape, but the package 'transparent.sty' is not loaded}%
    \renewcommand\transparent[1]{}%
  }%
  \providecommand\rotatebox[2]{#2}%
  \newcommand*\fsize{\dimexpr\f@size pt\relax}%
  \newcommand*\lineheight[1]{\fontsize{\fsize}{#1\fsize}\selectfont}%
  \ifx\svgwidth\undefined%
    \setlength{\unitlength}{277.6bp}%
    \ifx\svgscale\undefined%
      \relax%
    \else%
      \setlength{\unitlength}{\unitlength * \real{\svgscale}}%
    \fi%
  \else%
    \setlength{\unitlength}{\svgwidth}%
  \fi%
  \global\let\svgwidth\undefined%
  \global\let\svgscale\undefined%
  \makeatother%
  \begin{picture}(1,0.33429395)%
    \lineheight{1}%
    \setlength\tabcolsep{0pt}%
    \put(0,0){\includegraphics[width=\unitlength,page=1]{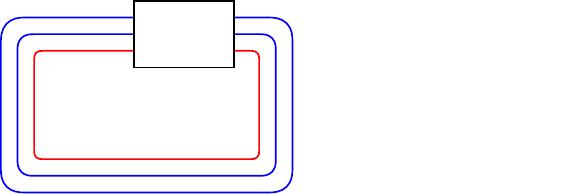}}%
    \put(0.30072804,0.2634702){\color[rgb]{0,0,0}\makebox(0,0)[lt]{\lineheight{1.25}\smash{\begin{tabular}[t]{l}$B$\end{tabular}}}}%
    \put(0,0){\includegraphics[width=\unitlength,page=2]{ConfigDisk.pdf}}%
    \put(0.11681782,0.17537031){\color[rgb]{0,0,0}\makebox(0,0)[lt]{\lineheight{1.25}\smash{\begin{tabular}[t]{l}$D$\end{tabular}}}}%
    \put(0,0){\includegraphics[width=\unitlength,page=3]{ConfigDisk.pdf}}%
    \put(0.67013189,0.04856916){\color[rgb]{0,0,0}\makebox(0,0)[lt]{\lineheight{1.25}\smash{\begin{tabular}[t]{l}$D$\end{tabular}}}}%
    \put(0,0){\includegraphics[width=\unitlength,page=4]{ConfigDisk.pdf}}%
    \put(0.7795389,0.39729744){\color[rgb]{0,0,0}\makebox(0,0)[lt]{\begin{minipage}{0.20172911\unitlength}\raggedright \end{minipage}}}%
  \end{picture}%
\endgroup%

	\caption[]{The configuration disk of a braid closure. The dashed line is where we take the disk.}
	\label{Fig:ConfigDisk}
\end{figure}

Suppose an $r$-component link $L = K_1\sqcup \dotsb \sqcup K_r$ is the closure of an $n$-strand braid $B$. The strands in $B$ can be indexed by the first coordinate in the disk $D$. We define the \textit{component function} to be a map
\begin{equation}\label{IndexFun}
\{ - \}: \{1,\dotsb, n\}\rightarrow \{1,\dotsb, r\}, 
\end{equation}
such that the strand $t$ of the braid belongs to the component $K_{\{t\}}$.

The strand index admits a natural linear ordering by the $x$-coordinate of the marked points in the configuration disk. A partition of a linearly ordered set is \textit{ordered}, if for any two parts $P_1,P_2$ of the partition, one has either
$$a< b \textrm{  for all } a\in P_1, b\in P_2, \quad \textrm{or} \quad a> b \textrm{  for all } a\in P_1, b\in P_2.$$

\begin{lem}\label{sortingLem}
If $L$ is the closure of an $n$-strand braid, then it can be represented by  an $n$-strand braid which admits an ordered partition where strands lie in the same part if and only if they belong to the same component of $L$.
\end{lem}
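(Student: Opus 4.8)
The plan is to use the conjugation move on braids, which preserves the closure as an oriented link, to sort the strands by component. The key observation is that the component function $\{-\}$ records exactly the cycle structure of the underlying permutation of the braid, so the lemma reduces to a purely combinatorial normalization of that permutation.

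First I would recall the standard surjection $p: Br_n \rightarrow S_n$ sending $\sigma_i$ to the transposition $(i\ i+1)$, and write $\pi_B = p(B)$ for the permutation of a braid $B$. Tracing a strand upward through $B$ and then around the identification $D\times\{0\}\sim D\times\{1\}$ shows that two marked points $s,t$ of the configuration disk lie on the same component of $\la B \ra$ if and only if they lie in the same cycle of $\pi_B$. In other words, the fibers of the component function $\{-\}: \{1,\dots,n\}\rightarrow\{1,\dots,r\}$ are precisely the cycles of $\pi_B$. Consequently, the conclusion is equivalent to producing a braid with closure $K$ whose permutation has every cycle supported on an interval of consecutive integers, since such intervals automatically form an ordered partition and two strands share an interval if and only if they share a component.

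Next I would choose a permutation $\tau\in S_n$ carrying the cycles of $\pi_B$ onto consecutive blocks: listing the cycles as $C_1,\dots,C_r$, let $\tau$ be any bijection sending $C_1$ to $\{1,\dots,|C_1|\}$, sending $C_2$ to $\{|C_1|+1,\dots,|C_1|+|C_2|\}$, and so on. Since $p$ is surjective, I pick $\beta\in Br_n$ with $p(\beta)=\tau$ and set $B'=\beta B\beta^{-1}$. Conjugation is one of Markov's moves, so $\la B'\ra$ is isotopic to $\la B\ra = K$ as an oriented link, with the same $r$ components. Moreover $\pi_{B'}=\tau\,\pi_B\,\tau^{-1}$, and conjugation transports each cycle $C_i$ to $\tau(C_i)$, which is an interval by construction. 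Hence the fibers of the component function for $B'$ are exactly the intervals $\tau(C_1),\dots,\tau(C_r)$, giving the desired ordered partition.

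I expect no serious obstacle here; the argument is essentially bookkeeping once the permutation picture is in place. The two points that demand care are the identification of the fibers of $\{-\}$ with the cycles of $\pi_B$, and the elementary fact that the cycles of $\tau\,\pi_B\,\tau^{-1}$ are the $\tau$-images of the cycles of $\pi_B$. One should also confirm that braid conjugation respects the orientation of $K$, which it does because it preserves the upward orientation of every strand.
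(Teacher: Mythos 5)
Your proof is correct and rests on the same idea as the paper's: conjugation of the braid preserves the oriented closure, so one can reorder the strands until each component occupies a consecutive block. The paper does this by conjugating with one half twist $\sigma_i$ at a time (finitely many adjacent swaps), while you package the same sequence of conjugations into a single conjugation by a braid $\beta$ lifting a sorting permutation $\tau$, with the cycle-structure bookkeeping made explicit --- a more formal write-up of the identical argument.
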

\begin{proof}
Suppose $L  = \la B\ra$ for some $n$-strand braid $B$. For any half twist $\sigma_i$, there is $\la B \ra = \la \sigma_i \sigma_i^{-1} B \ra = \la  \sigma_i^{-1} B \sigma_i\ra$. The conjugated braid $\sigma_i^{-1} B \sigma_i$ is also an $n$-strand braid, with $i$-th and $(i+1)$-th strands switched. We can conjugate the braid finitely many times until we obtain the desired braid.
\end{proof}

\begin{rmk}
By Lemma \ref{sortingLem}, we can assume there exist integers
$$0 = n_0 < n_1 < \dots < n_{r-1}< n_r = n,$$
such that the closure of the strands $\{n_{i-1}+1, n_{i-1}+2, \dotsb, n_i \}$ is the component $K_i$ of $L$. In other words, we can assume the component function is non-decreasing.
\end{rmk}

\subsection{Link group}
Let $X = \bbR^3$ or $S^3$. Let $L \subset X$ be an oriented $r$-component link, $L = K_1\sqcup \dotsb \sqcup K_r$. The link group $\pi_L$ is the fundamental group of the link complement, i.e. $\pi_L = \pi_1(X\setminus L)$. The link group has the following properties:

\begin{itemize}
\item
It is finitely generated by meridians, and finitely presented.
\item
There are $r$ conjugacy classes, labelled by components of the link.
\end{itemize}

These properties follow easily from the Wirtinger presentation of the link group, (for example, see \cite{Ro}). We recall the construction. Thinking of an oriented link by it two-dimensional diagram with under-crossings, in a generic position, the diagram has finitely many arcs and under-crossings. We take the base point of the fundamental group far away from the plane. Each arc corresponding to a loop in the fundamental group, which travels from the base point to the plane, wraps around the arc and then travels back. The orientation of the loop is determined by the orientation of the link. Note this loop is a meridian, which is by definition the boundary of a disk intersecting the link transversely at a point. Finally, each under-crossing imposes a conjugation relation among meridian generators, giving the Wirtinger presentation.

When the link $L$ is the closure of an $n$-strand braid, the link group $\pi_L$ can be generated by $n$ meridians, (though $n$ may not be the minimum number of meridian generators). To see this, we scan the braid diagram from left to right. At the beginning, each strand determines a meridian, and we denote it by $m_t$ for $1\leq t\leq n$. The braid is given by a word of half twists. Each half twist introduces an under-crossing, and therefore a new meridian in the Wirtinger presentation. The new meridian can be expressed as a word of previous meridians. Iterating the procedure, we see that the set of meridians $\{m_t\}_{1\leq t\leq n}$ generates the whole link group. In the rest of the paper, we will call 
$$\{m_t\}_{1\leq t\leq n}$$
the \textit{generating set of meridians} of an $n$-strand braid closure.

If we set the base point of the link group to be $(0, -\delta)\in D\subset X$, where $\delta$ is a small positive real number, then the meridian generators $\{m_t\}_{1\leq t\leq n}$ can be plotted on the configuration disk. Namely, $m_t$ is the loop in $D$ wrapping around the marked point $y_t = (t,0)$.

To be compatible with the convention in \cite{CELN}, we fix the orientation of the knot in the configuration disk pointing inward to the paper, and the meridian generators wrap clockwisely around marked points. See Figure \ref{Fig:ConfigurationDisk} for an example.

\begin{eg}
The planar diagram of the Whitehead link is plotted in Figure \ref{Fig:Whiteheadlink}.

\begin{figure}[h]
	\centering
	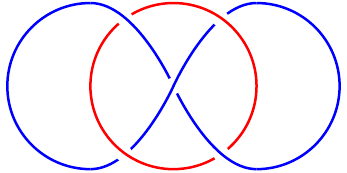
	\caption[]{The Whitehead link.}
	\label{Fig:Whiteheadlink}
\end{figure}
There are five strands, giving five meridian generators in the Wirtinger presentation of the link group. Each crossing in the diagram gives a conjugation relation. Therefore the link group is isomorphic to
$$\pi_L = \la m_1, m_2,m_3, m_4,m _5 \ra / \sim, $$
modulo relations
\begin{align*}
m_1\cdot m_5 &= m_4\cdot m_1,\quad m_3\cdot m_5 = m_5\cdot m_2,\quad m_1\cdot m_3 = m_3\cdot m_2, \\
&\quad m_3\cdot m_4 = m_4\cdot m_1,\quad m_4\cdot m_2 = m_2\cdot m_5.
\end{align*}

We have seen that the Whitehead link can be expressed as the closure of a $3$-strand braid, as in Example \ref{Whiteheadbraid}. Therefore the link group can be generated by three meridians instead of five. As for the Wirtinger presentation we just computed, we can write $m_3,m_5$ in terms of $m_1,m_2, m_4$, reducing the number of meridian generators to three.
\end{eg}

\subsection{Framed cord algebra}

The cord algebra first appeared in \cite{Ng1, Ng2}. The framed version was introduced in \cite{Ng3}, which models the degree zero knot contact homology \cite{EENS}.

Suppose $K\subset X$ is an oriented knot, and $n(K)$ is a small tubular neighborhood of $K$. A \textit{framing} of $K$ is a push off the knot to the boundary of the tubular neighborhood. In other words, a curve $\ell\subset \del(n(K))$ whose homology class in $H_1(n(K))$ agrees with $[K]\in H_1(n(K))$. A Seifert surface is an oriented surface $S$ with $\del S = K$. The \textit{Seifert framing} is $\ell = S\cap n(K)$, which is up to homotopy independent from the choice of the Seifert surface. The Seifert framing has zero linking number with $K$.

A framing of a link $L = K_1\sqcup \dotsb \sqcup K_r$ is the choice of a framing $\ell_i$ for each component $K_i$. We decorate each $\ell_i$ with a marked point $\ast_i\in \ell_i$. We write $\ast := \{\ast_1,\dotsb, \ast_r\}$, and 
$$L' = \ell_1\sqcup \dotsb \sqcup \ell_r.$$
We write $(L,L')$ for a framed link, namely an oriented link with a choice of a framing.

Upcoming, we define the framed cord algebra of a framed link, which is a mild generalization of the definition for knots as in {\cite[Definition 2.5]{CELN}}.

\begin{defn}
Suppose $(L, L')\subset X$ is a framed link. 

A framed cord of $L$ is a continuous map $c: [0,1]\rightarrow X\setminus L$ such that $c(0), c(1) \in L \setminus \ast$. Two framed cords are homotopic if they are homotopic through framed cords. We write $[c]$ for the homotopy class of the cord $c$.

We now construct a non-commutative unital ring $\cA$ as follows: as a ring, $\cA$ is freely generated by homotopy classes of framed cords and extra generators $\lambda_i^{\pm 1}, \mu_i^{\pm 1}$, $1\leq i\leq r$, modulo the following relations.

For any $1\leq i \leq r$,
$$
\lambda_i \cdot \lambda_i^{-1} = \lambda_i^{-1}\cdot \lambda_i = \mu_i\cdot \mu_i^{-1} = \mu_i^{-1}\cdot \mu_i = 1,
$$
and
$$
\lambda_i \cdot \mu_i = \mu_i \cdot \lambda_i. 
$$
Thus $\cA$ is generated as a $\bbZ$-module by non-commutative words in homotopy classes of cords and powers of $\lambda_i$ and $\mu_j$. The powers of the $\lambda_i$ and $\mu_i$ commute with each other, but do not with any cords.

The framed cord algebra is the quotient ring 
$$\textrm{Cord}(L, L') = \cA/\cI,$$
where $\cI$ is the two-sided ideal of $\cA$ generated by the following relations:

\pagebreak

\begin{itemize}
\item
(Normalization)
\begin{figure}[h]
	\centering
	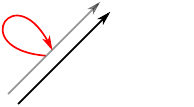
	\label{Fig:Normalization}
\end{figure}
\item
(Meridian)
\begin{figure}[h]
	\centering
	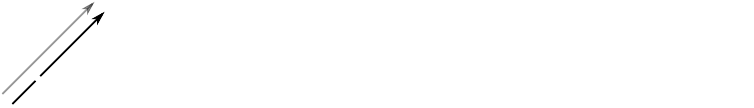
	\label{Fig:Meridian}
\end{figure}
\item
(Longitude)
\begin{figure}[h]
	\centering
	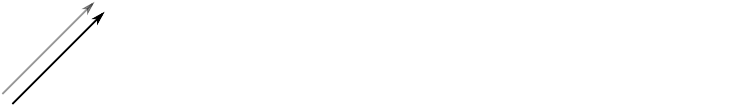
	\label{Fig:Longitude}
\end{figure}

\item (Skein relations)
\begin{figure}[h]
	\centering
	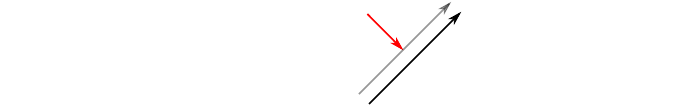
	\label{Fig:Skeinrelation}
\end{figure}

\end{itemize}

\begin{rmk}
The definition of a framed cord algebra, at first glance, depends on the choice of decorations $\ast_i$. Different choices of decorations give isomorphic algebras. Consider the following argument. Since generators are framed cords up to cord homotopy, we can choose a base point on each framing and homotope the endpoints of each cord to the base point. For example, we can choose base points to be a positive push off of marked points $\ast_i$ along each component of the framing. For two different sets of marked points, we can homotope one set of marked points to the other set along the framing. This homotopy induces an isomorphism between the framed cord algebras.
\end{rmk}

\begin{rmk}\label{rmkframing}
Given two choices of framings $L',L''$, their framed cord algebras are isomorphic as $\bbZ$-algebras. We can construct
$$\textrm{Cord}(L,L') \cong \textrm{Cord}(L, L''),$$
by keeping $\mu_i$ and framed cords, and sending $\lambda_i$ to $\lambda_i\cdot \mu_i^{\textrm{lk}_i(L,L'')-\textrm{lk}_i(L,L')}$. Here 
$\textrm{lk}_i(L,-)$ is the linking number between $K_i$ and its framing.

If $L'$ is the Seifert framing, we simply write the framed cord algebra as $\mathrm{Cord}(L)$.
\end{rmk}

\begin{rmk}
Let $\textrm{Cord}^c(L,L')$ be a quotient of $\textrm{Cord}(L,L')$, where $\lambda_i,\mu_i$ commute with everything. It is a $\bbZ[\lambda^{\pm1}_1,\mu^{\pm1}_1,\dotsb, \lambda^{\pm1}_r, \mu^{\pm1}_r]$-algebra. In particular we have $\lambda_i^{\pm1}, \mu_i^{\pm1}$ commute with all framed cords, and moreover for $1\leq i,j\leq r$,
$$\lambda_i \cdot \lambda_j = \lambda_j\cdot \lambda_i, \quad
\mu_i \cdot \mu_j = \mu_j\cdot \mu_i,\quad \lambda_i \cdot \mu_j = \mu_j\cdot \lambda_i.$$

Following \cite{Ng3}, $\textrm{Cord}^c(L,L')$ is isomorphic to the degree $0$ homology of a differential $\bbZ[\lambda^{\pm1}_1,\mu^{\pm1}_1,\dotsb, \lambda^{\pm1}_r, \mu^{\pm1}_r]$-algebra of the conormal tori of the link.

For different framings $L',L''$, there is an isomorphism $\textrm{Cord}^c(L,L') \cong \textrm{Cord}^c(L,L'')$. Note it is an isomorphism of $\bbZ$-algebras, but not of $\bbZ[\lambda^{\pm1}_1,\mu^{\pm1}_1,\dotsb, \lambda^{\pm1}_r, \mu^{\pm1}_r]$-algebras.
\end{rmk}

\end{defn}

\begin{rmk}\label{3fcas}
There are various versions of the (framed) cord algebra in literature, $\cC_L$ \cite{Ng3}, $\textrm{Cord}^c(L)$ \cite{CELN}, and $\cP_K$ \cite{Ng4}. Each cord algebra is a quotient ring of the free non-commutative algebra generated by some version of cords over the same coefficient ring $\bbZ[\lambda^{\pm1}_1,\mu^{\pm1}_1,\dotsb, \lambda^{\pm1}_r, \mu^{\pm1}_r]$. In each version, the free algebra quotients out four relations:  normalizations, meridian relations, longitude relations, and skein relations. We compare their definitions with a focus on skein relations.
\begin{enumerate}
\item
\cite{Ng3} $\cC_L$ is defined for a link $L$. Choose a marked point $\ast_i\in K_i$ for each $1\leq i\leq r$. A cord is a continue map $c: [0,1]\rightarrow X\setminus L$, with $c(0), c(1)\in L\setminus \{\ast_1\dotsb, \ast_r\}$. The skein relations are

\begin{figure}[h]
	\centering
	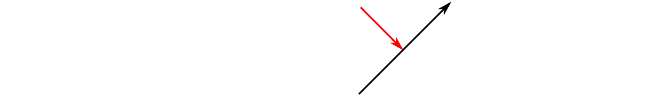
	\label{Fig:SkeinrelationCKrmk}
\end{figure}

\item
\cite{CELN} $\textrm{Cord}^c(L,L')$ is defined for a framed link $(L,L')$.  Choose a maked point $\ast_i\in \ell_i$ for each $1\leq i\leq r$. A cord is a continue map $c: [0,1]\rightarrow X\setminus L$, with $c(0), c(1)\in L'\setminus \{\ast_1\dotsb, \ast_r\}$. The skein relations are

\begin{figure}[h]
	\centering
	\input{Skeinrelation.pdf_tex}
	\label{Fig:Skeinrelationrmk}
\end{figure}

\item
\cite{Ng4} $\cP_K$ is defined for a knot $K$, namely $r =1$. A cord $[h]\in \cP_K$ is represented by a based loop $h\in \pi_K$. In particular, we forget the group structure in $\pi_K$ and view it as a set, and then take the set elements as free generators. Let $m$ be the meridian in the peripheral subgroup of $\pi_K$. The skein relations are
$$[h_1\cdot h_2] = [h_1\cdot m \cdot h_2] + [h_1][h_2].$$
Note that the $\cP_K$ version is only defined for knots, but not for links.

\end{enumerate}
If $L'$ is the Seifert framing, then $(1) = (2)$. If in addition that $L=K$ is a single component knot, then $(1) = (2) = (3)$. Namely, there are $\bbZ[\lambda^{\pm1}_1,\mu^{\pm1}_1,\dotsb, \lambda^{\pm1}_r, \mu^{\pm1}_r]$-algebra isomorphisms
$$\cC_K \cong \textrm{Cord}^c(K) \cong \cP_K.$$
From $\cC_K$ to $\textrm{Cord}^c(K)$, one can push the endpoints of a cord off the link to the framing. There are different choices of the push offs, and they are related by multiplying copies of $\mu_{i}$ depending on the linking number. From $\textrm{Cord}^c(K)$ to $\cP_K$, one can choose a base point on each framing, then move the endpoints of each framed cord to the base point along the orientation of the framing.
\end{rmk}

Cords in a framed cord algebra can be classified into pure and mixed cords depending on the points where the cord starts and ends.

\begin{defn}
Let $L$ be a link.

Suppose $L_0 \subset L$ is a sublink. A cord is a \textit{pure cord} of $L_0$ if it starts and ends on the framing of $L_0$.

Suppose $L_1, L_2 \subset L$ are two disjoint sublinks. A cord is a \textit{mixed cord} between $L_1$ and $L_2$ if either (1) it starts on the framing of $L_1$ and ends on the framing of $L_2$, or (2) it starts on the framing of $L_2$ and ends on the framing of $L_1$.

If we simply say a pure cord or a mixed cord without decorations, it means the underlying sublink is a knot.\end{defn}

Recall the configuration disk $D$ with $n$-marked points $\{y_1,\dotsb,y_n\}$ can be regarded as a locally closed submanifold in $X$ with $\{y_1,\dotsb,y_n\} = D\cap L$. We can perturb the framing such that $D\cap L' = \{x_1, x_2,\dotsb, x_n \} = \{(1, -\delta), (2, -\delta),\dotsb, (n, -\delta)\}$, where $\delta$ is a small positive number. Recall we had a marked point $\ast_i$ on each $\ell_i$, where $1\leq i\leq r$. We assume that $\{\ast_i\}_{1\leq i\leq r}$ and $\{x_j\}_{1\leq j\leq n}$ are all distinct.

The orientation of the link $L$ induces an orientation of the framing $L'$. After a suitable perturbation, the framing $\ell_{\{i\}}$ can be viewed as a (framed) cord, starting and ending at $x_i$. We call it a \textit{framing cord}, and denote it by $[\ell_{\{i\}}^{(i)}]$.

Recall that we set the based point of $\pi_L$ at $x_0 = (0, -\delta)$. Define the \textit{capping path} $p_i$ to be the linear path from $x_0$ to $x_i$, for $i=1,\dotsb, n$.

If we move the based point of $\pi_L$ to one of $x_i$, a meridian generator $m_t$ can be viewed as a framed cord, which we term as a \textit{meridian cord}. A meridian based at different longitudes are different framed cords in $\mathrm{Cord}(L)$. Hence it is necessary to remember the base point. Let $m_t^{(i)}$ be the meridian cord based at $x_i$ wrapping around $y_t$. Then,
$$m_t = p_i \cdot m_t^{(i)}\cdot p_i^{-1}.$$

\begin{defn}
Define the \textit{standard cord} to be $\gamma_{ij} := p_i^{-1} \cdot p_j$, for $i,j \in\{1, \dotsb, n\}$.
\end{defn}
Standard cords satisfy the following relations:
\begin{itemize}
\item
$\gamma_{ij}\cdot \gamma_{ji} = \gamma_{ii} = e_{\{i\}}$, where $e_{\{i\}}$ is the trivial cord on $\ell_{\{i\}}$,
\item
$\gamma_{ij}\cdot \gamma_{jk} = \gamma_{ik}$.
\end{itemize}

Meridian cords based at different points are related by
$$m_t^{(i)} = \gamma_{ij}\cdot m_t^{(j)} \cdot \gamma_{ji}.$$

\begin{figure}[h]
	\centering
	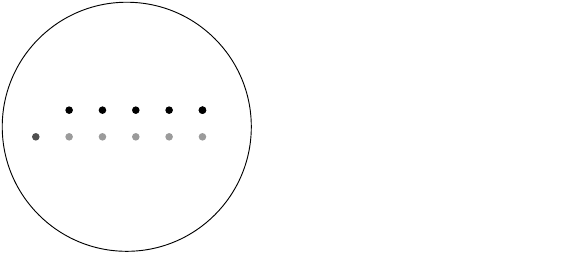
	\caption[]{A configuration disk (left), and examples of meridian cords and a standard cord (right).}
	\label{Fig:ConfigurationDisk}
\end{figure}

The relations among framed cords on the configuration disk can be written in the following way. Let $c_{ij}$ denote a framed cord from $x_i$ to $x_j$. 
\begin{itemize}
\item
Normalization
$$[\gamma_{ii}] = 1 -\mu_{\{i\}},$$
\item
Meridian 
$$[m_i^{(i)}\cdot c_{ij}] = \mu_{\{i\}}[c_{ij}],\quad  [c_{ij} \cdot m_j^{(j)}] = [c_{ij}]\mu_{\{j\}},$$
\item
Longitude $$[\ell^{(i)}_{\{i\}}\cdot c_{ij}] = \lambda_{\{i\}} [c_{ij}], \quad [c_{ij}\cdot \ell^{(j)}_{\{j\}}] = [c_{ij}]\lambda_{\{j\}},$$
\item
Skein relations
\begin{equation}\label{ConfDsr}
[c_{it} \cdot c_{tj}] = [c_{it} \cdot m_t^{(t)}\cdot c_{tj}] + [c_{it}][c_{tj}].
\end{equation}
\end{itemize}
The longitude relation is not visible in the configuration disk, but we write it down anyways. Note a longitude cord can be written as a word of meridian generators in the configuration disk. For most of calculations over the disk, it suffices to use the other three relations.

\begin{lem}\label{StdCordsGen}
Standard cords generate the framed cord algebra.
\end{lem}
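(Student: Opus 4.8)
The plan is to show that every cord generator of $\textrm{Cord}(K)$ lies in the subalgebra generated by the standard cords $\gamma_{ij}$ together with the structural generators $\lambda_i^{\pm 1},\mu_i^{\pm 1}$; since $\mu_{\{i\}}=1-[\gamma_{ii}]$ by the normalization relation, this amounts essentially to generation by standard cords. First I would normalize a framed cord $c$ to have both endpoints at the standard points $x_a$. Each endpoint lies on a framing circle $\ell_{\{a\}}$, and $\ell_{\{a\}}\setminus\{\ast_{\{a\}}\}$ is a single arc through which every point $x_a$ of that component passes; sliding an endpoint along this arc is a cord homotopy in $X\setminus K$ that never meets the decoration $\ast_{\{a\}}$. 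Hence, up to homotopy, I may assume $c=c_{ab}$ runs from $x_a$ to $x_b$.

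Next I would pass to the link group. Concatenating with capping paths, $g:=p_a\cdot c_{ab}\cdot p_b^{-1}$ is a based loop at $x_0$, so $g\in\pi_K$ and $c_{ab}\simeq p_a^{-1}\cdot g\cdot p_b$. Because $\pi_K$ is generated by the meridians $m_t$, I can write $g=m_{t_1}^{\epsilon_1}\cdots m_{t_k}^{\epsilon_k}$; substituting $m_t=p_t\cdot m_t^{(t)}\cdot p_t^{-1}$ and telescoping the capping paths via $p_i^{-1}\cdot p_j=\gamma_{ij}$ yields a concatenation
\[
c_{ab}\simeq \gamma_{a t_1}\cdot (m_{t_1}^{(t_1)})^{\epsilon_1}\cdot\gamma_{t_1 t_2}\cdots (m_{t_k}^{(t_k)})^{\epsilon_k}\cdot \gamma_{t_k b},
\]
that is, a single cord built from standard cords with $k$ meridian cords inserted, each of the precise form $m_{t_l}^{(t_l)}$ (wrapping $y_{t_l}$, based at $x_{t_l}$).

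The core of the argument is then an induction on $k$, the number of inserted meridian cords. For $k=0$ the concatenation collapses to a single standard cord $\gamma_{ab}$ via $\gamma_{ij}\cdot\gamma_{jk}=\gamma_{ik}$. For the inductive step I isolate the leftmost meridian cord and write $c_{ab}=A\cdot (m_{t_1}^{(t_1)})^{\epsilon_1}\cdot B$, where $A=\gamma_{a t_1}$ ends at $x_{t_1}$ and $B$ begins at $x_{t_1}$. When $\epsilon_1=+1$, the skein relation~\eqref{ConfDsr} gives $[A\cdot m_{t_1}^{(t_1)}\cdot B]=[A\cdot B]-[A]\cdot[B]$, where $[A\cdot B]$ and $[B]$ are single cords carrying only $k-1$ meridian cords, so the induction hypothesis applies. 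When $\epsilon_1=-1$, I apply the same skein relation to $A':=A\cdot (m_{t_1}^{(t_1)})^{-1}$ in place of $A$; since $A'\cdot m_{t_1}^{(t_1)}\simeq A$, this rearranges to $[c_{ab}]=[A\cdot B]+[A']\cdot[B]$, and the meridian relation identifies $[A']=[\gamma_{a t_1}\cdot (m_{t_1}^{(t_1)})^{-1}]=[\gamma_{a t_1}]\mu_{\{t_1\}}^{-1}$, a standard cord up to a scalar. In every case each resulting term has strictly fewer meridian cords and therefore lies in the target subalgebra by induction.

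The main obstacle, and the point needing the most care, is the bookkeeping forced by the distinction between concatenation of paths (which produces a single cord generator) and multiplication in the algebra: the skein relation is exactly the bridge between these two operations, so one must verify that every meridian cord produced in the reduction genuinely has the form $m_{t_l}^{(t_l)}$ in order that the skein and meridian relations apply verbatim, and that the inverse-meridian case does not silently increase the meridian count. The remaining verifications — that sliding endpoints along $\ell_{\{a\}}\setminus\{\ast_{\{a\}}\}$ is a valid cord homotopy, and that the capping paths telescope correctly — are routine.
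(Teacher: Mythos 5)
Your proposal is correct and takes essentially the same approach as the paper: both arguments first slide the endpoints of a cord to the points $x_i$, then express the cord as a word of meridians interleaved with standard cords, and finally induct on the number of meridian letters, resolving the leftmost one by the skein relation (with the meridian relation handling the inverse-meridian case so the count does not increase). The only difference is bookkeeping — you pre-telescope the capping paths into the normal form $\gamma_{a t_1}\cdot (m_{t_1}^{(t_1)})^{\epsilon_1}\cdots \gamma_{t_k b}$, whereas the paper keeps the loop abstract as $[\gamma^{(i)}\cdot\gamma_{ij}]$ and converts meridians on the fly via $m_t^{(i)}=\gamma_{it}\cdot m_t^{(t)}\cdot\gamma_{ti}$.
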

\begin{proof}
By generation, we mean that every element in $\mathrm{Cord}(L)$ can be written as a sum of words consisting of standard cords and $\lambda_i^{\pm1}, \mu_i^{\pm1}$. We start by proving two claims.

\smallskip
(1) Every framed cord is cord homotopic to one from $x_i$ to $x_j$ for some $i,j \in\{1,\dotsb, n\}$.

Suppose a cord $[c]$ starts at $x\in\ell_s\setminus \{\ast_s\}$. There exists $x_i$ such that $\{i\}= s$. Take a path $p\subset \ell_s$ connecting $x_i$ and $x$, without passing through point $\ast_s$. The concatenation $p\cdot c$ is homotopic to $c$, and the staring point of $p\cdot c$ is $x_i$. Similarly one can homotope the end point to some $x_j$.

\smallskip
(2) Every framed cord from $x_i$ to $x_j$ is cord homotopic to the concatenation of a loop $h^{(i)} \in \pi_1(X\setminus K, x_i)$ and the standard cord $\gamma_{ij}$.

Suppose $[c_{ij}]$ is a framed cord from $x_i$ to $x_j$. We set $h^{(i)} = c_{ij}\cdot \gamma_{ji}$, then $[c_{ij}] = [c_{ij}\cdot \gamma_{ji} \cdot \gamma_{ij}] = [h^{(i)} \cdot \gamma_{ij}]$, proving the claim.

\smallskip
Now we prove the lemma. By statements (1) and (2), it suffices to prove the assertion for a framed cord in the form of $[c] = [h^{(i)} \cdot \gamma_{ij}]$. Since $h^{(i)} \in \pi_1(X\setminus L, x_i)$, it is generated by meridians. We prove by induction on the word length of $h^{(i)}$. The initial step is trivial, since $[c] = [\gamma_{ij}]$ is already a standard cord.

Suppose the induction hypothesis that $[h^{(i)}\cdot \gamma_{ij}]$ is generated by standard cords holds for any loop $h^{(i)}\in \pi_L$ with any base point $x_i$ for $1\leq i\leq n$ and with word length less or equal to $s \in \bbN$, we need to show that the induction hypothesis also holds for word length $s+1$. Namely, $[(m_t^{(i)})^{\pm 1}\cdot h^{(i)}\cdot \gamma_{ij}]$ can be generated by standard cords. There is
\begin{align*}
[m_t^{(i)}\cdot h^{(i)} \cdot \gamma_{ij}] 
	&= [\gamma_{it} \cdot m_t^{(t)} \cdot \gamma_{ti} \cdot h^{(i)} \cdot \gamma_{ij}]  \\
	&= [\gamma_{it} \cdot \gamma_{ti} \cdot h^{(i)} \cdot \gamma_{ij}] - [\gamma_{it}] [ \gamma_{ti} \cdot h^{(i)} \cdot \gamma_{ij}]  \\
	&= [h^{(i)} \cdot \gamma_{ij}] - [\gamma_{it}] [ \gamma_{ti} \cdot h^{(i)} \cdot \gamma_{it}\cdot \gamma_{ti}\cdot \gamma_{ij}] \\
	&= [h^{(i)} \cdot \gamma_{ij}] - [\gamma_{it}] [ h^{(t)} \cdot \gamma_{tj}].
\end{align*}
The second equality uses the skein relation, and the other equalities are cord identities. Similarly we have
\begin{align*}
[(m_t^{(i)})^{-1}\cdot h^{(i)} \cdot \gamma_{ij}] 
	&= [\gamma_{it} \cdot (m_t^{(t)})^{-1} \cdot \gamma_{ti} \cdot h^{(i)} \cdot \gamma_{ij}]  \\
	&= [h^{(i)} \cdot \gamma_{ij}] + [\gamma_{it}] [(m_t^{(t)})^{-1} \cdot \gamma_{ti} \cdot h^{(i)} \cdot \gamma_{ij}]  \\
	&= [h^{(i)} \cdot \gamma_{ij}] + [\gamma_{it}] \cdot \mu_{\{t\}}^{-1} \cdot [ h^{(t)} \cdot \gamma_{tj}].
\end{align*}
 The last equation uses the meridian relation. We complete the induction, as well as the proof of the lemma.
\end{proof}

\begin{rmk}
It follows the lemma that meridian cords are generated by standard cords.
Suppose $[m_t^{(b)}]$ is a meridian cord wrapping around strand $t$, and based at $x_b$. That is to say, the underlining loop is $m_t \in \pi_1(X\setminus L, x_b)$. Then
\begin{align*}
[m_t^{(b)}] = [\gamma_{bb}]  -  [\gamma_{bt}][\gamma_{tb}], \qquad 
[(m_t^{(b)})^{-1}] = [\gamma_{bb}]  +  [\gamma_{bt}] \cdot \mu_{\{t\}}^{-1} \cdot [\gamma_{tb}]. 
\end{align*}
Here $\{t\}$ is the component function defined in (\ref{IndexFun}). Note that a meridian cord depends on the base point as well as the homotopy class of the underlying loop.
 
\end{rmk}

\subsection{Augmentations}
Let $L$ be an oriented link and let $\mathrm{Cord}(L)$ be its framed cord algebra.  An augmentation $\epsilon$ of $\mathrm{Cord}(L)$ is a unit preserving algebra morphism
$$\epsilon: \mathrm{Cord}(L)\rightarrow k,$$
where $k$ is any commutative field.

Recall we have a variant framed cord algebra $\textrm{Cord}^c(L)$ where $\lambda_i^{\pm1},\mu_i^{\pm1}$ commute with cords. We can similarly define its augmentation as a unit preserving algebra morphism
$$\epsilon: \mathrm{Cord}^c(L)\rightarrow k.$$
Since $k$ is commutative, the image of any non-commutative generators in either $\mathrm{Cord}(L)$ or $\mathrm{Cord}^c(L)$ becomes commutative as elements in $k$. Therefore, the set of augmentations for $\mathrm{Cord}(L)$ and $\mathrm{Cord}^c(L)$ are canonically isomorphic.

\begin{rmk}
In Remark \ref{rmkframing} we explained that different framings give isomorphic $\bbZ$-algebras but not $\bbZ[\lambda^{\pm1}_1,\mu^{\pm1}_1,\dotsb, \lambda^{\pm1}_r, \mu^{\pm1}_r]$-algebras. Hence there is a bijection between augmentations with respect to two different framings. However, if we restrict to augmentations sending $\lambda_i,\mu_i$ to particular values in $k^*$, then there is no bijection.
\end{rmk}

Given a framed cord $[c]$, its augmented value $\epsilon([c])$ will be abbreviated as $\epsilon(c)$. For example, applying an augmentation $\epsilon$ to (\ref{ConfDsr}) can be expressed as
$$\epsilon(c_{it} \cdot c_{tj}) = \epsilon(c_{it} \cdot m_t^{(t)}\cdot c_{tj}) + \epsilon(c_{it})\epsilon(c_{tj}).$$

Suppose $L$ is the closure of an $n$-strand braid and $\epsilon: \mathrm{Cord}(L)\rightarrow k$ is an augmentation. We define an $n\times n$ square matrix $R$, by $R_{ij} = \epsilon(\gamma_{ij})$. Namely,
$$
R = 
\begin{pmatrix}
\epsilon(\gamma_{11}) & \dotsb & \epsilon(\gamma_{1n}) \\
\vdots & \ddots &\vdots \\
\epsilon(\gamma_{n1}) & \dotsb & \epsilon(\gamma_{nn})
\end{pmatrix}.
$$

\begin{rmk}
Since Lemma \ref{StdCordsGen} asserts that standard cords generate $\mathrm{Cord}(L)$, it is natural to ask whether the matrix $R$ in turn determines the augmentation $\epsilon$. The answer is sometimes. For example when $K$ is a knot as in \cite[Theorem 4.16]{Ga2}, in the first two cases one can reconstruct the augmentation $\epsilon$ from the associated matrix $R$, while in the third case one needs to specify $\epsilon(\lambda)$ to recover $\epsilon$ from $R$.
\end{rmk}

\subsection{Augmentation representation}

We defined a matrix $R$ out of an augmentation $\epsilon: \mathrm{Cord}(L)\rightarrow k$. Let $R_j$ be the $j$-th column vector of $R$. Define a $k$-vector space
$$V_\epsilon := \textrm{Span}_k\{R_j\}_{1\leq j\leq n}.$$

We adopt the following convention to represent a column vector of size $n$. For any path $c_j$ from $x_0$ to $x_j$, define
$$\epsilon({p}_\alpha^{-1} \cdot c_j) = \big(\epsilon({p}_1^{-1} \cdot c_j),\dotsb, \epsilon(p_n^{-1}\cdot c_j)\big).$$
\begin{thm-defn}\label{MainConstruction}
Suppose $L$ is an oriented link equipped with its Seifert framing. Let $\epsilon: \mathrm{Cord}(L)\rightarrow k$ be an augmentation of its framed cord algebra. The following map defines a link group representation $\rho_\epsilon: \pi_L\rightarrow GL(V_\epsilon)$,
\begin{equation}\label{DefAugRep}
\rho_\epsilon(h) R_j: = \epsilon({p}_\alpha^{-1} \cdot h \cdot p_j), \textrm{  where } h\in \pi_1(X\setminus L, x_0).
\end{equation}
We call $(\rho_\epsilon,V_\epsilon)$ the \textit{augmentation representation} associated to $\epsilon$.
\end{thm-defn}

\begin{rmk}
The construction of $(\rho_\epsilon, V_\epsilon)$ depends on the choice of a braid, so as to define standard cords $\gamma_{ij}$ and the matrix $R$. However, the isomorphism class of the representation is independent from the braid, proven in Theorem \ref{MainThm}.
\end{rmk}

The action for a general loop $h$ might be difficult to compute, but the action of meridians on standard cords takes a simpler form. Check out the following lemma.

\begin{lem}\label{meridanRj}
For $1\leq t, i, j\leq n$, there are
\begin{equation}\label{MeridianAction}
\rho_\epsilon(m_t) R_j = R_j - \epsilon(\gamma_{tj})R_t, \qquad \rho_\epsilon(m_t^{-1}) R_j  =  R_j + \mu_{\{t\}}^{-1}\epsilon(\gamma_{tj})R_t.
\end{equation}
\end{lem}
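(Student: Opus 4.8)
The plan is to verify both identities one coordinate at a time, reading off the defining formula \eqref{DefAugRep} and reducing everything to the configuration-disk skein relation \eqref{ConfDsr}. Fix $i$; by definition the $i$-th coordinate of the vector $\rho_\epsilon(m_t)R_j$ is $\epsilon(p_i^{-1}\cdot m_t\cdot p_j)$. The first step is to rewrite this based loop in terms of standard cords and the based meridian cord $m_t^{(t)}$. Inserting the base-point change $m_t = p_t\cdot m_t^{(t)}\cdot p_t^{-1}$ and using $p_i^{-1}\cdot p_t = \gamma_{it}$ together with $p_t^{-1}\cdot p_j = \gamma_{tj}$, I obtain $p_i^{-1}\cdot m_t\cdot p_j = \gamma_{it}\cdot m_t^{(t)}\cdot \gamma_{tj}$, so the coordinate of interest equals $\epsilon(\gamma_{it}\cdot m_t^{(t)}\cdot \gamma_{tj})$.

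Next I apply the skein relation \eqref{ConfDsr} with strand $t$ as the insertion point and the two standard cords $\gamma_{it}$ and $\gamma_{tj}$. Since $\gamma_{it}\cdot\gamma_{tj} = \gamma_{ij}$, the relation reads $\epsilon(\gamma_{ij}) = \epsilon(\gamma_{it}\cdot m_t^{(t)}\cdot \gamma_{tj}) + \epsilon(\gamma_{it})\epsilon(\gamma_{tj})$, so the $i$-th coordinate becomes $\epsilon(\gamma_{ij}) - \epsilon(\gamma_{it})\epsilon(\gamma_{tj})$. Recognizing $\epsilon(\gamma_{ij})$ as the $i$-th entry of $R_j$ and $\epsilon(\gamma_{it})$ as the $i$-th entry of $R_t$, and noting that the scalar $\epsilon(\gamma_{tj})$ does not depend on $i$, this is precisely the $i$-th entry of $R_j - \epsilon(\gamma_{tj})R_t$. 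Since $i$ was arbitrary, the first identity follows.

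For the inverse meridian the same rewriting presents the $i$-th coordinate of $\rho_\epsilon(m_t^{-1})R_j$ as $\epsilon(\gamma_{it}\cdot (m_t^{(t)})^{-1}\cdot\gamma_{tj})$. I would apply the skein relation once more, this time to the pair $\gamma_{it}$ and $(m_t^{(t)})^{-1}\cdot\gamma_{tj}$; the inserted factor collapses via $m_t^{(t)}\cdot(m_t^{(t)})^{-1}\cdot\gamma_{tj}=\gamma_{tj}$ to give $\gamma_{it}\cdot\gamma_{tj} = \gamma_{ij}$, while the meridian relation $[(m_t^{(t)})^{-1}\cdot\gamma_{tj}] = \mu_{\{t\}}^{-1}[\gamma_{tj}]$ turns the product term into $\mu_{\{t\}}^{-1}\epsilon(\gamma_{it})\epsilon(\gamma_{tj})$. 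This yields the coordinate $\epsilon(\gamma_{ij}) + \mu_{\{t\}}^{-1}\epsilon(\gamma_{it})\epsilon(\gamma_{tj})$, i.e. the $i$-th entry of $R_j + \mu_{\{t\}}^{-1}\epsilon(\gamma_{tj})R_t$. In fact, after the identity $m_t^{(i)}\cdot\gamma_{ij}=\gamma_{it}\cdot m_t^{(t)}\cdot\gamma_{tj}$, both computations coincide exactly with the inductive step already carried out in the proof of Lemma \ref{StdCordsGen} (specialized to trivial $\gamma^{(i)}$), so I may simply invoke those identities rather than redo them.

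There is no deep obstacle here; the argument is a direct computation. The only point requiring care is the algebraic bookkeeping in the first step---conjugating the based loop $m_t$ by the capping paths so that the skein relation is applied at the \emph{same} strand $t$ around which $m_t$ winds. It is exactly this coincidence of indices that forces the correction term to be a multiple of the single column $R_t$; this is what gives the meridian action its simple form and, as a byproduct, makes it manifest that $\rho_\epsilon(m_t^{\pm 1})$ is a rank-one perturbation of the identity and in particular preserves $V_\epsilon = \textrm{Span}_k\{R_j\}$.
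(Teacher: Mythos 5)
Your proof is correct and takes essentially the same approach as the paper's: the rewriting $p_i^{-1}\cdot m_t\cdot p_j = \gamma_{it}\cdot m_t^{(t)}\cdot\gamma_{tj}$ via capping paths, the application of the skein relation (\ref{ConfDsr}) to produce the $-\epsilon(\gamma_{it})\epsilon(\gamma_{tj})$ correction, and the use of the meridian relation to convert $\epsilon((m_t^{(t)})^{-1}\cdot\gamma_{tj})$ into $\mu_{\{t\}}^{-1}\epsilon(\gamma_{tj})$ for the inverse case are exactly the paper's steps, with your coordinate index $i$ playing the role of the paper's vector index $\alpha$. There are no gaps.
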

\begin{proof}
We recall some notations and identities. For $1\leq t,i,j \leq n$, $p_i$ is a capping path from $x_0$ to $x_i$, $\gamma_{ij} = p_i^{-1} \cdot p_j$ is a standard cord from $x_i$ to $x_j$, $m_t$ is a meridian based at $x_0$, and $m_t^{(t)} = p_t^{-1}\cdot m_t \cdot p_t$ is a meridian cord based at $x_t$. 

To verify the first identity, by definition and interpolating $(p_t\cdot p_t^{-1})$, there is
$$
\rho_\epsilon(m_t) R_j := \epsilon(p_\alpha^{-1}\cdot m_t \cdot  p_j) 
= \epsilon(p_\alpha^{-1}\cdot (p_t\cdot p_t^{-1}) \cdot m_t \cdot (p_t\cdot p_t^{-1}) \cdot  p_j) 
= \epsilon(\gamma_{\alpha t} \cdot m_t^{(t)}\cdot \gamma_{tj}).
$$
By the skein relation, there is 
$$
\epsilon(\gamma_{\alpha t} \cdot m_t^{(t)}\cdot \gamma_{tj}) 
= \epsilon(\gamma_{\alpha t}\cdot \gamma_{tj}) - \epsilon(\gamma_{\alpha t})\epsilon(\gamma_{tj}) 
= R_j - \epsilon(\gamma_{tj})R_t.
$$
Combining these equations, we conclude that
$$\rho_\epsilon(m_t) R_j = R_j - \epsilon(\gamma_{tj})R_t.$$

In a similar fashion, we can derive for the second identity in the assertion: 
\begin{align*}
\rho_\epsilon(m_t^{-1}) R_j 
	&= \epsilon(p_\alpha^{-1}\cdot m_t^{-1} \cdot  p_j) = \epsilon(\gamma_{\alpha t} \cdot (m_t^{(t)})^{-1}\cdot \gamma_{tj}) \\
	&= \epsilon(\gamma_{\alpha t}\cdot (m_t^{(t)})\cdot(m_t^{(t)})^{-1}\cdot \gamma_{tj}) + \epsilon(\gamma_{\alpha t})\epsilon((m_t^{(t)})^{-1}\cdot \gamma_{tj}) \\
	& = R_j + \epsilon((m_t^{(t)})^{-1}\cdot \gamma_{tj})R_t.
\end{align*}
From the meridian relation, we know that $\epsilon((m_t^{(t)})^{-1}\cdot \gamma_{tj}) = \mu_{\{t\}}^{-1}\epsilon(\gamma_{tj})$. We conclude that
$$\rho_\epsilon(m_t^{-1}) R_j  = R_j + \mu_{\{t\}}^{-1}\epsilon(\gamma_{tj})R_t.$$
\end{proof}

Now we are ready to prove that the construction is indeed a representation. 

\begin{proof}[Proof of Theorem-Definition \ref{MainConstruction}]
We write $(\rho,V)$ instead of $(\rho_\epsilon, V_\epsilon)$ in this proof.

(1) To see that $\rho(h)$ is a closed linear map, it suffices to prove for meridian generators. Namely, for $h = m_t^{\pm 1}$ and any vector $R_j$, there is $\rho(m_t)R_j\in V$. By Lemma \ref{meridanRj}, we have
\begin{equation}\label{MeridianAction1}
\rho(m_t) R_j = R_j- \epsilon({p}_t^{-1}\cdot p_j) R_t, \quad \rho(m_t^{-1}) R_j  = R_j + \epsilon(p_t^{-1}\cdot m_t^{-1}\cdot p_j)R_t.
\end{equation}
For fixed $t$ and $j$, $\epsilon({p}_t^{-1}\cdot p_j)$ and $\epsilon(p_t^{-1}\cdot m_t^{-1}\cdot p_j)$ are constants in the field $k$. Therefore $\rho(m_t^{\pm 1})$ is closed.

(2) The identity is straightforward to check. Let $e$ be the identity loop based at $x_0$, then $\rho(e)R_j = R_j$ for all $j$ by definition.

(3) To verify the composition in the group action, one needs $\rho(h_1)\rho(h_2) = \rho(h_1\cdot h_2)$. Since each loop in $\pi_L$ can be expressed as a word of meridian generators, we can prove by inducting on the word length of $h_2$.

For the initial step, we take $h_1 \in \pi_L$, and $h_2 = m_t^{\pm 1}$. It suffices to prove that for any $R_j$, $\rho(h_1)\rho(m_t^{\pm 1})R_j = \rho(h_1\cdot m_t^{\pm 1})R_j$. For $m_t$, the left hand side is
\begin{align*}
\rho(h_1)\rho(m_t)R_j 
&=  \rho(h_1) \big( R_j- \epsilon({p}_t^{-1}\cdot p_j) R_t \big) \\
&= \epsilon(p_\alpha^{-1}\cdot h_1 \cdot p_j) -\epsilon({p}_t^{-1}\cdot p_j)\epsilon(p_{\alpha}^{-1}\cdot h_1 \cdot p_t). 
\end{align*}
The first equation follows (\ref{MeridianAction}) and the second equation is by definition (\ref{DefAugRep}). Continuing with the right hand side, there is
\begin{align*}
\rho(h_1\cdot m_t)R_j 
&= \epsilon(p_\alpha^{-1}\cdot h_1\cdot m_t \cdot p_j) \\
&= \epsilon(p_\alpha^{-1}\cdot h_1\cdot p_j) - \epsilon(p_\alpha^{-1}\cdot h_1 \cdot p_t) \epsilon(p_t^{-1}\cdot p_j).
\end{align*}
The first equation is by definition (\ref{DefAugRep}) and the second equation is by skein relations. Comparing these equations, we see that $\rho(h_1)\rho(m_t)R_j = \rho(h_1\cdot m_t)R_j$ for any $j= 1,\dotsb, n$.

The proof for the case $h_2 = m_t^{-1}$ is similar. Namely,
$$\rho(h_1)\rho(m_t^{-1})R_j = \epsilon(p_{\alpha}^{-1}\cdot h_1\cdot p_j) + \epsilon( p_t^{-1}\cdot m_t^{-1}\cdot p_j)\epsilon (p_{\alpha}^{-1}\cdot h_1 \cdot p_t) = \rho(h_1\cdot m_t^{-1})R_j.$$

Proceeding to the induction step, we assume that $\rho(h_1)\rho(h_2') = \rho(h_1\cdot h_2')$ for any $h_1\in \pi_L$, and any $h_2'\in \pi_L$ that can be written as a word of meridian generators with length less or equal to $s$. Assume $h_2 = h_2' m_t^{\pm 1}$ has word length less or equal to $s+1$, then
$$
\rho(h_1)\rho(h_2' m_t^{\pm 1})=\rho(h_1)\rho(h_2')\rho(m_t^{\pm 1}) = \rho(h_1h_2')\rho(m_t^{\pm 1}) = \rho(h_1h_2'm_t^{\pm 1}).
$$
Each of the three equalities follows from the induction hypothesis. Therefore we complete the induction and show that $\rho(h_1)\rho(h_2) = \rho(h_1\cdot h_2)$. 

(4) We prove that $\rho(h)$ is a well-defined linear map, namely if $I \subset \{1,\dotsb, n\}$ is a subset and $\sum_{i\in I} a_i R_i=0$ for some constants $a_i$, then $\sum_{i\in I} a_i \rho(h)R_i=0$ as well. If $h = m_t$, by Lemma \ref{meridanRj}, there is
$$\sum_{i\in I} a_i \rho(h)R_i = \sum_{i\in I} a_i R_i - \sum_{i\in I} a_i \epsilon(\gamma_{ti}) R_t.$$
The first summand is zero by hypothesis. In the second summand, $\sum_{i\in I} a_i \epsilon(\gamma_{ti})$ equals to the $t$-th row of $\sum_{i\in I} a_i R_i=0$, which is also zero. The argument for $h =m_t^{-1}$ is similar. Continuing by an induction on the word length of $h$ in terms of meridian generators, we prove the well-definedness.

\end{proof}

Since an isotopy of a framed link can be extended to an isotopy of the ambient manifold, and every framed cord finds its counterpart following the isotopy, we see that $\mathrm{Cord}(L)$ is an invariant of framed links. An augmentation $\epsilon:\mathrm{Cord}(L)\rightarrow k $, algebraically defined as an algebra morphism, is also invariant under isotopies.

However, the construction of $(\rho_\epsilon, V_\epsilon)$ requires a braid representative. In the following theorem, we show the representation is independent from the choice of the braid. 

\begin{thm}\label{MainThm}
Suppose $L$ is an oriented link equipped with its Seifert framing. Let $\epsilon: \mathrm{Cord}(L)\rightarrow k$ be an augmentation of its framed cord algebra. Up to isomorphism, the augmentation representation $(\rho_\epsilon,V_\epsilon)$ is well-defined for the augmentation $\epsilon$. In particular, it does not depend on the choice of the braid in the construction.
\end{thm}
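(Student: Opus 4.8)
The plan is to invoke the Alexander--Markov correspondence. Since $K$ is presented as a braid closure in two different ways, by Markov's theorem the two braids are related by a finite sequence of conjugations inside a fixed braid group (Markov move I) and (de)stabilizations $B\leftrightarrow B\sigma_n^{\pm 1}$ (Markov move II). It therefore suffices to produce, for each elementary move, an isomorphism of $\pi_K$-representations between the two augmentation representations built from the \emph{same} augmentation $\epsilon$ on the braid-independent algebra $\textrm{Cord}(K)$. Destabilization needs no separate treatment, being the inverse of stabilization and isomorphism being symmetric.

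Before treating the moves, I would record the structural consequence of Lemma \ref{meridanRj}: writing $v_t$ for the $t$-th coordinate of a vector, the action reads $\rho_\epsilon(m_t)v = v - v_t R_t$ and $\rho_\epsilon(m_t^{-1})v = v + \mu_{\{t\}}^{-1} v_t R_t$, since the $t$-th entry of $R_j$ is exactly $\epsilon(\gamma_{tj})$. Thus the whole representation is encoded by the tuple $(R_1,\dots,R_n)$ together with these rank-one transvections, and to match two such representations it is enough to match this package of data under a single linear isomorphism.

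For conjugation I would argue geometrically. Passing from a braid $B$ to a conjugate $\alpha B\alpha^{-1}$ amounts to cutting the braid closure along a different meridian disk of the solid torus, and the two configuration disks are carried one to the other by the ambient isotopy $\phi_s$ that flows along the core of the solid torus. This flow is tangent to $K$, hence preserves $(K,L)$ setwise and starts at the identity, so $\{\phi_t\}_{t\le s}$ is an isotopy to the identity; for any cord $c$ the family $\{\phi_t(c)\}$ is a homotopy of cords, whence $\phi_s$ acts as the identity on $\textrm{Cord}(K)$ and fixes every augmented value. Transporting the capping paths along $\phi_s$ and comparing them with the straight capping paths of the conjugated braid yields a permutation of the strand indices together with a unipotent correction (coming from the capping-path discrepancy, which is a product of meridians); this composite is the desired change of basis intertwining $\rho_\epsilon$ for the two braids. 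I expect this step to be conceptually clean, the only labor being the bookkeeping of the correction.

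The substantive step is stabilization, which I expect to be the main obstacle. Here the strand count jumps from $n$ to $n+1$, so $V_\epsilon$ sits inside $k^n$ in one presentation and inside $k^{n+1}$ in the other, and I must exhibit an abstract isomorphism between the two. The new strand enters through a single crossing $\sigma_n^{\pm 1}$, i.e. a Reidemeister-I kink attaching it to strand $n$; I would first use this geometry, together with the skein and meridian relations on the configuration disk, to compute the new row and column $\hat R_{n+1,\bullet}$, $\hat R_{\bullet,n+1}$ and the corner $\hat R_{n+1,n+1}=1-\mu_{\{n+1\}}$ in terms of the $n\times n$ block. The expected outcome is that the extra column is a prescribed combination of the first $n$ columns once the new coordinate is accounted for, so that the coordinate projection $k^{n+1}\to k^n$ (or its section) restricts to an isomorphism of column spans. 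I would then check that this identification intertwines the meridian actions, the only new datum being $\rho_\epsilon(m_{n+1})$, which must correspond under the projection to an action already present in the $n$-strand picture (using that $m_{n+1}$ is conjugate to $m_n$ across the kink). Both signs must be handled, the $\sigma_n^{-1}$ case differing only by the $\mu_{\{n+1\}}^{-1}$ factors in Lemma \ref{meridanRj}. Finally I would observe that the remaining auxiliary choices --- the base point and the specific capping paths within a fixed braid --- are absorbed by the same isotopy and change-of-basis argument used for conjugation, completing the proof.
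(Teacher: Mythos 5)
Your proposal follows essentially the same route as the paper's proof: reduce via Markov's theorem to conjugation and stabilization, encode the representation by the matrix $R$ together with the rank-one meridian actions of Lemma \ref{meridanRj}, and construct explicit intertwiners --- for stabilization exactly the projection/section you describe (the paper likewise finds the new column $\tilde{R}_{n+1}$ proportional to $\tilde{R}_n$, so the column spans agree), and for conjugation a permutation-plus-shear change of basis. Your ambient-isotopy argument is a geometric derivation of the cord identities the paper simply records in (\ref{conjugationcords}), and the ``bookkeeping'' you defer is precisely the paper's case-by-case skein-relation verification (its Case (C), where the old meridian $m_{s+1}$ must be rewritten as the conjugated word $\tilde{m}_{s+1}^{-1}\cdot\tilde{m}_s\cdot\tilde{m}_{s+1}$ in the new generators, is the substantial part), which does go through as you expect.
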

\begin{proof}

Suppose $B$ is a braid whose closure is $L$. Let $(\rho_\epsilon,V_\epsilon)$ be the augmentation representation of $\epsilon$ constructed with respect to $B$ as in Theorem-Definition \ref{MainConstruction}. Suppose $\tilde{B}$ is another braid whose closure is also $L$, and let $(\tilde{\rho}_\epsilon,\tilde{V}_\epsilon)$ be the representation constructed with respect to $\tilde{B}$. We shall prove that $(\rho_\epsilon,V_\epsilon)$ and $(\tilde{\rho}_\epsilon,\tilde{V}_\epsilon)$ are isomorphic as $\pi_L$-representations.

In the rest of the proof, we drop the subscript and write $(\rho,V), (\tilde{\rho},\tilde{V})$ for $(\rho_\epsilon,V_\epsilon), (\tilde{\rho}_\epsilon,\tilde{V}_\epsilon)$

To prove the isomorphism, we will construct a linear map $T: V\rightarrow \tilde{V}$ which intertwines with link group actions. Namely, for any $h \in \pi_L$, there should be a commutative diagram,
\begin{center}
\begin{tikzpicture}
  \node (A){$V$};
  \node (B)[right of=A, node distance=2cm]{$\tilde{V}$};
  \node (C)[below of=A, node distance=2cm]{$V$};
  \node (D)[right of=C, node distance=2cm]{$\tilde{V}$};
  \draw[->] (A) to node [] {$T$} (B);
  \draw[->] (A) to node [swap] {$\rho(h)$} (C);
  \draw[->] (B) to node []{$\tilde\rho(h)$}(D);
  \draw[->] (C) to node []{$T$}(D);
\end{tikzpicture}
\end{center}
which can be restated as $\tilde{\rho}(h)\circ T = T\circ \rho(h)$. Since the link group is generated by a set of meridian generators, it suffices to check the commutativity for $h$ being a meridian generator.

By Markov's theorem, two braids have isotopic closures if and only if they are related by a sequence of equivalent relations: (1) they are equivalent braids, (2) they are conjugate braids, (3) one braid is a positive/negative stabilization of the other braid. It is clear that the construction of the augmentation representation is well-defined within an equivalence class of braids. We will verify the well-definedness in other cases.

\medskip
{\textbf{Conjugations.}} 
Suppose $B$ is an $n$-strand braid. Let $\sigma_s$, $1\leq s\leq n-1$ be the positive half twist in the braid group $Br_n$. Let $\tilde B = \sigma_s B\sigma_s^{-1}$ be a conjugation. 

\begin{figure}[h]
	\centering
	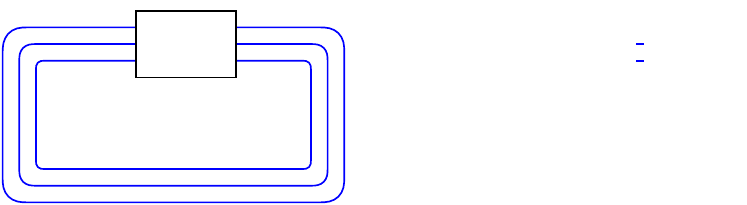
	\caption[]{Conjugation.}
	\label{Fig:Conjugation}
\end{figure}

We write $\gamma_{ij}$ for the standard cords in the configuration disk $D$, and $\tilde{\gamma}_{ij}$ (abbreviated from a more rigorous notation $\tilde{\gamma}_{\tilde{i}\tilde{j}}$) for the standard cords in the configuration disk $\tilde{D}$ that is used to define $(\tilde{\rho},\tilde{V})$. These standard cords are related as elements in $\mathrm{Cord}(L)$, expressed as follows: for $i,j \neq s,s+1$,
\begin{align}\label{conjugationcords}
\tilde{\gamma}_{ij}  & = \gamma_{ij}, \nonumber\\
\tilde\gamma_{i,s+1} &= \gamma_{is}, & \tilde{\gamma}_{is} &= \gamma_{is} \cdot m_s^{(s)} \cdot \gamma_{s,s+1}, \nonumber\\
\tilde\gamma_{s+1,j} &= \gamma_{sj}, & \tilde{\gamma}_{sj} &= \gamma_{s+1,s} \cdot (m_s^{(s)})^{-1} \cdot \gamma_{sj},\\
\tilde{\gamma}_{s,s} &= \gamma_{s+1,s+1}, & \tilde{\gamma}_{s,s+1} &= \gamma_{s+1,s}\cdot (m_s^{(s)})^{-1}, \nonumber\\
\tilde{\gamma}_{s+1,s+1} &= \gamma_{s,s}, & \tilde{\gamma}_{s+1,s} &= m_{s}^{(s)}\cdot \gamma_{s,s+1}.\nonumber
\end{align}
We related the two sets of meridian generators $\{m_t\}_{1\leq t\leq n}$, $\{\tilde{m}_t\}_{1\leq t\leq n}$ in a similar way:
\begin{align*}
&\tilde{m}_{s+1} = m_s,\quad \tilde{m}_s = m_s\cdot m_{s+1}\cdot m_s^{-1},\\
&\tilde{m}_t = m_t \quad \textrm{ for } \; t\neq s,s+1.
\end{align*}
Again, $\tilde{m}_{t}$ is an abbreviation of $\tilde{m}_{\tilde{t}}$.

We define matrices $R,\tilde{R}$ by $R_{ij} = \epsilon(\gamma_{ij}), \tilde{R}_{ij} = \epsilon(\tilde\gamma_{ij})$. Following the construction of the augmentation representation, we define
$$V := \textrm{Span}_k\{R_j\}_{1\leq j\leq n}, \quad \tilde{V} := \textrm{Span}_k\{\tilde{R}_j\}_{1\leq j\leq n}.$$
We first verify that $V$ and $\tilde{V}$ have the same rank. There exist $n\times n$ invertible matrices $M_L$ and $M_R$ such that $\tilde{R} = M_L R M_R$, where
$$
M_L =
\begin{pmatrix}
I_{s-1} & & \\
& M_L' & \\
& & I_{n-s-1}
\end{pmatrix},
\quad
M_R =
\begin{pmatrix}
I_{s-1} & & \\
& M_R' & \\
& & I_{n-s-1}
\end{pmatrix},
$$
and $M_L'$, $M_R'$ are the following $2\times 2$ matrices
$$
M_L'  = 
\begin{pmatrix}
\epsilon(\gamma_{s+1,s}\cdot(m_s^{(s)})^{-1}) & 1 \\
1 & 0
\end{pmatrix},
\quad
M_R'  = 
\begin{pmatrix}
-\epsilon(\gamma_{s,s+1}) & 1 \\
1 & 0
\end{pmatrix}.
$$
Observe that $M_L,M_R$ only affect rows and columns that are indexed by $s,s+1$. If neither neither of $i$ or $j$ belongs to $\{s,s+1\}$, then
$$(M_L R M_R)_{ij}  =R_{ij} = \epsilon(\gamma_{ij}) = \epsilon(\tilde{\gamma}_{ij}) = \tilde{R}_{ij}.$$
If only one of the row or column index is in $\{s,s+1\}$, then 
\begin{align*}
    (M_L R M_R)_{sj} = (M_LR)_{sj} &= \epsilon(\gamma_{s+1, s}\cdot (m_s^{(s)})^{-1}) \epsilon(\gamma_{sj}) + \epsilon(\gamma_{s+1,j}) \\
    &= \epsilon (\gamma_{s+1,s} \cdot (m_s^{(s)})^{-1} \cdot \gamma_{sj}) = \epsilon(\tilde{\gamma}_{sj}) = \tilde{R}_{sj},\\
    (M_L R M_R)_{s+1, j} = (M_LR)_{s+1,j} &= \epsilon(\gamma_{sj}) = \epsilon(\tilde \gamma _{s+1,j}) = \tilde{R}_{s+1,j},\\
    (M_L R M_R)_{is} = (RM_R)_{is} &= -\epsilon(\gamma_{is})\epsilon(\gamma_{s, s+1}) + \epsilon(\gamma_{i,s+1}) \\
    &= \epsilon (\gamma_{is} \cdot m_s^{(s)} \cdot \gamma_{s,s+1}) = \epsilon(\tilde{\gamma}_{is}) = \tilde{R}_{is},\\
    (M_L R M_R)_{i, s+1} = (RM_R)_{i, s+1} &= \epsilon(\gamma_{is}) = \epsilon(\tilde \gamma_{i,s+1}) = \tilde{R}_{i,s+1}.
\end{align*}
Finally we verify for the $2\times 2$ sub-matricies whose indices are contained in $[s,s+1]^2$. Let $A_{[s,s+1]^2}$ be the $2\times 2$ sub-matrix of $A$ consisting entries with both row and column indices in $\{s,s+1\}$. Let $\epsilon(\gamma_{s+1,s}) = a,\epsilon(\gamma_{s,s+1}) = b$, then
\begin{align*}
(M_L R M_R)_{[s,s+1]^2} &= M_L' R _{[s,s+1]^2} M_R' \\
&= 
\begin{pmatrix}
a\mu_{\{s\}}^{-1} & 1 \\
1 & 
\end{pmatrix}
\begin{pmatrix}
1-\mu_{\{s\}} & b \\
a & 1- \mu_{\{s+1\}}
\end{pmatrix}
\begin{pmatrix}
-b & 1 \\
1 & 
\end{pmatrix} \\
&=
\begin{pmatrix}
1- \mu_{\{s+1\}} & a\mu_{\{s\}}^{-1} \\
b\mu_{\{s\}}& 1-\mu_{\{s\}}
\end{pmatrix}= \tilde{R}_{[s,s+1]^2},
\end{align*}
where the last equality is because of $\epsilon(\tilde{\gamma}_{s,s+1}) = \epsilon(\gamma_{s+1,s}\cdot (m_s^{(s)})^{-1})= \epsilon(\gamma_{s+1,s})\mu_{\{s\}}^{-1} =  a\mu_{\{s\}}^{-1}$, and similarly $\epsilon(\tilde{\gamma}_{s,s+1}) = b\mu_{\{s\}}$. Therefore $\tilde{R} = M_L R M_R$. Because $M_L$ and $M_R$ are invertible, $V$ and $\tilde{V}$ are isomorphic as vector spaces.

\smallskip
Define a linear map $T: V\rightarrow \tilde{V}$ over the spanning vectors:
\begin{align*}
T(R_s) &= \tilde{R}_{s+1}, \quad T(R_{s+1}) = \tilde{R}_s + \epsilon(\gamma_{s,s+1})\tilde{R}_{s+1},\\
T(R_j) &= \tilde{R}_j \;\; \textrm{for} \;\; j\neq s,s+1.
\end{align*}
The inverse map $T^{-1}$ is thusly determined:
\begin{align*}
T^{-1}(\tilde{R}_s) &= {R}_{s+1} - \epsilon(\gamma_{s,s+1})R_s, \quad T^{-1}(\tilde{R}_{s+1}) = {R}_s,\\
T^{-1}(\tilde{R}_j) &= {R}_j \;\; \textrm{for} \;\; j\neq s,s+1.
\end{align*}
In terms of the spanning vectors, $T$ and $T^{-1}$ can be expressed as $n\times n$ matrices:
\begin{equation}\label{CongT}
T = 
\begin{pmatrix}
I_{s-1} &&&\\
& 0 & 1 & \\
& 1 & \epsilon(\gamma_{s,s+1}) & \\
& & & I_{n-s-1}
\end{pmatrix},
\quad 
T^{-1}
=
\begin{pmatrix}
I_{s-1} &&&\\
& -\epsilon(\gamma_{s,s+1}) & 1 & \\
& 1 & 0 & \\
& & & I_{n-s-1}
\end{pmatrix}.
\end{equation}
Since the row operations given by $M_L$ do not change the linear dependence relations among the spanning vectors of $V$, and the matrix $M_R$ for column operations is consistent with $T^{-1}$, the map $T$ descents to a linear transform $V\rightarrow \tilde {V}$.

\smallskip
Finally we show that $\tilde{\rho}(\gamma)\circ T = T\circ \rho(\gamma)$ for any $\gamma = m_t$, $t = 1, \dotsb, n$. We introduce some notations. Following the calculations in Lemma \ref{meridanRj}, $\rho(m_t)$ and $\tilde{\rho}(\tilde{m}_t)$ can be written as $n\times n$ matrices with respect to their spanning vectors:
\begin{equation}\label{CongM}
M_t: =\rho(m_t)= I_n - \sum_{j=1}^n\epsilon(\gamma_{tj})E_{tj}, \quad \tilde{M}_t :=\tilde\rho(\tilde m_t)= I_n - \sum_{j=1}^n\epsilon(\tilde{\gamma}_{tj})E_{tj},
\end{equation}
where $E_{ij}$ is the matrix having $1$ at the $(i,j)$-th entry and $0$ elsewhere.

We use Kroneker delta $\delta^i_j$ to denote a number which equals to $1$ when $i=j$ and $0$ otherwise. We do not use Einstein's convention for contractions, and summations will be indicated.
 
Introducing new matrices $N_t := I_n - M_t$ and $\tilde{N}_t := I_n - \tilde{M}_t$, then
\begin{align}\label{SubstitutionMN}
\begin{split}
(M_t)_{ij} &= \delta^i_j - \epsilon(\gamma_{tj})\delta^t_i,\quad (N_t)_{ij} = \epsilon(\gamma_{tj})\delta^t_i, \\
(\tilde{M}_t)_{ij} &= \delta^i_j - \epsilon(\tilde\gamma_{tj})\delta^t_i, \quad (\tilde{N}_t)_{ij} = \epsilon(\tilde\gamma_{tj})\delta^t_i.
\end{split}
\end{align}

We are ready to proceed into details. Recall our goal is to verify
\begin{equation}\label{MainThmCongRepCom}
\tilde{\rho}(m_t) \circ T = T \circ \rho(m_t).
\end{equation}
Note that on the left hand side, it is $\tilde{\rho}(m_t)$ instead of $\tilde{\rho}(\tilde{m}_t)$. We consider the following three cases based on the value of $t$: (A) $t\neq s,s+1$, (B) $t=s$ and (C) $t=s+1$.

\smallskip 
\underline{(A) $t\neq s,s+1$.}

We have $m_t = \tilde{m}_t$, and equation (\ref{MainThmCongRepCom}) becomes $\tilde{M}_t T = T M_t$. Invoking the relations $N_t = I_n - M_t$, $\tilde{N}_t = I_n - \tilde{M}_t$, it suffices to check 
$$\tilde{N}_t T = T{N}_t.$$ 

The right hand side is $(TN_t)_{ij} = \sum_eT_{ie}(N_t)_{ej} = \sum_eT_{ie}\epsilon(\gamma_{tj})\delta^t_e = T_{it}\epsilon(\gamma_{tj})$. Since $t\neq s,s+1$, there is $T_{it} = \delta^i_t$. Hence if $i\neq t$, then $(TN_t)_{ij}=0$; if $i = t$, then $(TN_t)_{ij}=\epsilon(\gamma_{ij})$.

The left hand side if $(\tilde{N}_t T)_{ij} = \sum_d (\tilde{N}_t)_{id} T_{dj} = \sum_d \epsilon(\tilde{\gamma}_{td})\delta^t_i T_{dj}$. If $i\neq t$, then $(\tilde{N}_t T)_{ij} =0$. If $i=t$, then $(\tilde{N}_t T)_{ij} = \sum_d \epsilon(\tilde{\gamma}_{id}) T_{dj}$.

\begin{itemize}

\item If $j\neq s,s+1$, then $T_{dj} = \delta^d_j$, and hence $(\tilde{N}_t T)_{tj} = \epsilon(\tilde{\gamma}_{ij}) = \epsilon({\gamma}_{ij})$.

\item If $j=s$, then $(\tilde{N}_t T)_{ts} = \epsilon(\tilde{\gamma}_{t,s+1}) = \epsilon({\gamma}_{ts})$.

\item If $j=s+1$, then 
$$\qquad (\tilde{N}_t T)_{t,s+1} = \epsilon(\tilde{\gamma}_{ts}) + \epsilon(\tilde{\gamma}_{t,s+1}) \epsilon(\gamma_{s,s+1}) 
	= \epsilon(\gamma_{ts} \cdot m_s^{(s)} \cdot \gamma_{s,s+1}) + \epsilon({\gamma}_{ts})\epsilon(\gamma_{s,s+1}) 
	= \epsilon(\gamma_{t,s+1}).$$

\end{itemize}

\smallskip 
\underline{(B) $t = s$.}

We have $m_s = \tilde{m}_{s+1}$ and equation (\ref{MainThmCongRepCom}) becomes $\tilde{M}_{s+1} T = T{M}_s$. It suffices to verify 
$$\tilde{N}_{s+1} T = T{N}_s.$$

\begin{itemize}
\item If $i \neq s+1$, then $(\tilde{N}_{s+1} T)_{ij} =  0 = (T{N}_s)_{ij}$, because
\begin{align*}
(\tilde{N}_{s+1} T)_{ij}&= \sum_d(\tilde{N}_{s+1})_{id} T_{dj} =  \sum_d\epsilon(\tilde{\gamma}_{s+1,j})\delta^{s+1}_i T_{dj} = 0, \\
(T{N}_{s})_{ij}&= \sum_e T_{ie} ({N}_{s})_{ej} = \sum_e T_{ie} \epsilon({\gamma}_{sj})\delta^{s}_e = T_{is} \epsilon({\gamma}_{sj}) = 0.
\end{align*}

\item If $i=s+1$. 

	- When $j\neq s,s+1$, then $(\tilde{N}_{s+1}T)_{s+1, j} =  \epsilon(\tilde{\gamma}_{s+1,j}) = \epsilon({\gamma}_{sj}) = (T{N}_s)_{s+1, j}$.

	- When $j=s$, then $(\tilde{N}_{s+1}T)_{s+1, s} = \epsilon(\tilde{\gamma}_{s+1,s+1}) \stackrel{(\ref{conjugationcords})}{=} \epsilon(\gamma_{s,s}) = (T{N}_{s})_{s+1,s} $.

\smallskip

	- When $j=s+1$, then
$$
\qquad (\tilde{N}_{s+1}T)_{s+1, s+1} = \epsilon(\tilde\gamma_{s+1,s}) + \epsilon(\tilde{\gamma}_{s+1,s+1})\epsilon(\gamma_{s,s+1}),\quad
(T{N}_s)_{s+1,s+1} = \epsilon(\gamma_{s,s+1}).
$$

\noindent Note $\epsilon(\tilde{\gamma}_{s+1,s+1}) = \epsilon(\gamma_{s,s}) = 1- \mu_{\{s\}}$ and $\epsilon(\tilde{\gamma}_{s+1,s}) = \epsilon(m_{s}^{(s)}\cdot \gamma_{s,s+1})= \mu_{\{s\}}\epsilon(\gamma_{s,s+1}).$
Therefore,
$$\quad \quad \epsilon(\tilde\gamma_{s+1,s}) + \epsilon(\tilde{\gamma}_{s+1,s+1})\epsilon(\gamma_{s,s+1}) = \mu_{\{s\}}\epsilon(\gamma_{s,s+1}) + (1- \mu_{\{s\}}) \epsilon(\gamma_{s,s+1}) = \epsilon (\gamma_{s,s+1}).$$

\end{itemize}

\smallskip 
\underline{(C) $t= s+1$.}

We have $m_{s+1} = \tilde{m}_{s+1}^{-1}\cdot \tilde{m}_s\cdot \tilde{m}_{s+1}$. This time equation (\ref{MainThmCongRepCom}) becomes 
$$\tilde{M}_{s+1}^{-1}\tilde{M}_s \tilde{M}_{s+1} T = TM_{s+1},$$ 
and it further simplifies to
\begin{equation}\label{congcase3eq}
\tilde{M}_{s+1}^{-1} \tilde{N}_s \tilde{M}_{s+1} T = TN_{s+1}.
\end{equation}

The right hand side of (\ref{congcase3eq}) at the $(i,j)$-entry is 
$$(TN_{s+1})_{ij} = \sum_{d}T_{id}(N_{s+1})_{dj} = \sum_{d}T_{id}\,\epsilon(\gamma_{s+1,j})\delta^d_{s+1} = T_{i,s+1}\epsilon(\gamma_{s+1,j}).$$

\noindent Since row $s+1$ of $T$ is mostly zero except for $i = s,s+1$, the right hand side of (\ref{congcase3eq}) is
\begin{equation}\label{congCaseCRHS}
\begin{cases}
(TN_{s+1})_{ij} =0,  \qquad i \neq s,s+1, \\
(TN_{s+1})_{sj} = \epsilon(\gamma_{s+1,j}), \\
 (TN_{s+1})_{s+1, j} = \epsilon({\gamma}_{s,s+1})
 \epsilon(\gamma_{s+1,j}).
\end{cases}
\end{equation}

For the left hand side of (\ref{congcase3eq}). We claim the following expansion

\begin{equation}\label{CongCaseCLHSexpansion}
(\tilde{M}_{s+1}^{-1}\tilde{N}_s \tilde{M}_{s+1} T)_{ij} = (\tilde{M}_{s+1}^{-1})_{is} \cdot \sum_f \epsilon(\tilde{\gamma}_{s,s+1} \cdot \tilde{m}_{s+1}^{(s+1)} \cdot \tilde{\gamma}_{s+1,f})T_{fj}.
\end{equation}
Proof of claim (\ref{CongCaseCLHSexpansion}): Using (\ref{SubstitutionMN}), we can express $(\tilde{M}_{s+1}^{-1}\tilde{N}_s \tilde{M}_{s+1} T)_{ij}$ as
\begin{align*}
 \sum_{d,e,f} (\tilde{M}_{s+1}^{-1})_{id}(\tilde{N}_s)_{de}(\tilde{M}_{s+1})_{ef}T_{fj} 
&= \sum_{d,e,f} (\tilde{M}_{s+1}^{-1})_{id}\,\epsilon(\tilde{\gamma}_{se})\delta^s_d(\tilde{M}_{s+1})_{ef}T_{fj} \\
&=(\tilde{M}_{s+1}^{-1})_{is} \cdot \sum_{e,f} \epsilon(\tilde{\gamma}_{se})(\tilde{M}_{s+1})_{ef}T_{fj}.
\end{align*}
The term $\epsilon(\tilde{\gamma}_{se})(\tilde{M}_{s+1})_{ef}$ can be rewritten as
\begin{align*}
\epsilon(\tilde{\gamma}_{se})(\tilde{M}_{s+1})_{ef} &= \epsilon(\tilde{\gamma}_{se}) (\delta^e_f - \epsilon(\tilde\gamma_{s+1,f})\delta^{s+1}_e) \\
&= \epsilon(\tilde\gamma_{sf}) - \epsilon(\tilde\gamma_{s+1,f})\epsilon(\tilde\gamma_{s,s+1}) = \epsilon(\tilde{\gamma}_{s,s+1} \cdot \tilde{m}_{s+1}^{(s+1)} \cdot \tilde{\gamma}_{s+1,f}).
\end{align*}
Combining these terms and we verify the claim.

\smallskip
Observe that the expansion in \eqref{CongCaseCLHSexpansion} is the product of two terms, one depends only on $i$ and the other depends only on $j$. Consider the term with $i$, $(\tilde{M}_{s+1}^{-1})_{is}$. Given the expression of $\tilde{M}_t$, it is straightforward to compute its inverse
$$\tilde{M}_t^{-1} = I_n + \sum_{j=1}^{n} \tilde{\mu}_{\{t\}}^{-1}\epsilon(\tilde{\gamma}_{tj})E_{tj}, 
\qquad (\tilde{M}_t^{-1})_{ij} = \delta^i_j + \tilde{\mu}_{\{t\}}^{-1} \epsilon(\tilde{\gamma}_{tj}) \delta^t_i.$$
Let $t= s+1$ and we get
$$(\tilde{M}_{s+1}^{-1})_{is} = \delta^i_s + \tilde{\mu}_{\{s+1\}}^{-1} \epsilon(\tilde{\gamma}_{s+1, s}) \delta^{s+1}_i.$$

\begin{itemize}
\item If $i\neq s,s+1$, $(\tilde{M}_{s+1}^{-1})_{is} = 0$.

\item If $i =s$, then $(\tilde{M}_{s+1}^{-1})_{is} = 1$.

\item If $i=s+1$, then $(\tilde{M}_{s+1}^{-1})_{is} = \tilde{\mu}_{\{s+1\}}^{-1}\epsilon(\tilde{\gamma}_{s+1, s})=\epsilon(\gamma_{s,s+1})$, because
$$ \tilde{\mu}_{\{s+1\}}^{-1}\epsilon(\tilde{\gamma}_{s+1, s})= \epsilon((\tilde{m}_{s+1}^{(s+1)})^{-1}\cdot \tilde{\gamma}_{s+1,s}) = \epsilon((m_s^{s})^{-1}\cdot m_s^{s}\cdot \gamma_{s,s+1}) = \epsilon(\gamma_{s,s+1}).$$

\end{itemize}

Consider the term involving $j$. We first present the answer, which is
\begin{equation}\label{congCaseCtermj}
\sum_f \epsilon(\tilde{\gamma}_{s,s+1} \cdot \tilde{m}_{s+1}^{(s+1)} \cdot \tilde{\gamma}_{s+1,f})T_{fj} = \epsilon(\gamma_{s+1, j}).
\end{equation}
Multiplying it with the term involving $i$, the result matches (\ref{congCaseCRHS}), proving (\ref{congcase3eq}).

\smallskip
We verify (\ref{congCaseCtermj}) in the following cases.

\begin{itemize}
\item If $j\neq s,s+1$, then $T_{fj} = \delta^f_j$, and $\sum_f \epsilon(\tilde{\gamma}_{s,s+1} \cdot \tilde{m}_{s+1}^{(s+1)} \cdot \tilde{\gamma}_{s+1,f})T_{fj} $ becomes
$$
\epsilon(\tilde{\gamma}_{s,s+1} \cdot \tilde{m}_{s+1}^{(s+1)} \cdot \tilde{\gamma}_{s+1,j}) 
= \epsilon(\gamma_{s+1,s}\cdot (m_s^{(s)})^{-1}\cdot m_s^{(s)}\cdot \gamma_{sj}) = \epsilon(\gamma_{s+1,j}).
$$

\item If $j=s$, then $T_{fs} = \delta^{f}_{s+1}$, and $\sum_f \epsilon(\tilde{\gamma}_{s,s+1} \cdot \tilde{m}_{s+1}^{(s+1)} \cdot \tilde{\gamma}_{s+1,f})T_{fs}$ becomes
$$
\epsilon(\tilde{\gamma}_{s,s+1} \cdot \tilde{m}_{s+1}^{(s+1)} \cdot \tilde{\gamma}_{s+1,s+1}) 
= \epsilon(\gamma_{s+1,s}\cdot (m_s^{(s)})^{-1}\cdot m_s^{(s)}\cdot \gamma_{ss}) = \epsilon(\gamma_{s+1,s}).
$$

\item If $j=s+1$, then $T_{f,s+1} = \delta^{f}_{s} + \epsilon(\gamma_{s,s+1})\delta^{f}_{s+1}$, and 
\begin{align*}
&\sum_f \epsilon(\tilde{\gamma}_{s,s+1} \cdot \tilde{m}_{s+1}^{(s+1)} \cdot \tilde{\gamma}_{s+1,f})T_{f,s+1} \\
=& \epsilon(\tilde{\gamma}_{s,s+1} \cdot \tilde{m}_{s+1}^{(s+1)} \cdot \tilde{\gamma}_{s+1,s}) + \epsilon({\gamma}_{s,s+1})\epsilon(\tilde{\gamma}_{s,s+1} \cdot \tilde{m}_{s+1}^{(s+1)} \cdot \tilde{\gamma}_{s+1,s+1}) \\
=& \epsilon(\gamma_{s+1,s}\cdot (m_s^{(s)})^{-1}\cdot m_s^{(s)}\cdot m_s^{(s)} \cdot \gamma_{s,s+1}) +  \epsilon({\gamma}_{s,s+1}) \epsilon(\gamma_{s+1,s}) \\
=& \epsilon(\gamma_{s+1,s}\cdot m_s^{(s)} \cdot \gamma_{s,s+1}) +  \epsilon(\gamma_{s+1,s})\epsilon({\gamma}_{s,s+1}) \\
=& \epsilon(\gamma_{s+1,s+1}).
\end{align*}
\end{itemize}

\noindent The second equality follows from (\ref{conjugationcords}), and the last equality follows from skein relations.

To summarize, we have proven (\ref{MainThmCongRepCom}), and the augmentation representations before and after a conjugation are isomorphic as representations.

\medskip
{\textbf{Positive/Negative stabilizations.}} 
A stabilization of a braid closure is given by adding one strand and performing a positive/negative half twist with the outmost strand. The braid closure of a stabilized braid is equivalence to the closure of the original braid by a Reidemeister I move. Therefore stabilizations do not change the isotopy class of a link.

 Let $\iota: Br_n\rightarrow Br_{n+1}$ be the natural inclusion by adding a trivial strand labelled by $n+1$. Suppose $B$ is an $n$-strand braid, its positive stabilization is $\sigma_{n}\iota(B)$ and its negative stabilization is $\sigma_{n}^{-1}\iota(B)$.

\begin{figure}[h]
	\centering
	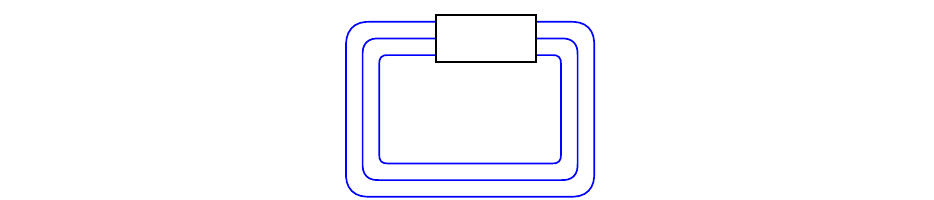
	\caption[]{Stabilizations: positive (left) and negative (right).}
	\label{Fig:Stabilizations}
\end{figure}

Let $B$ be an $n$-strand braid whose closure is $L$, let $\tilde{B} = \sigma_{n}\iota(B)$ be its positive stabilization. We adopt similar notations as before. Coefficients involving $\mu$ are related by
\begin{align}
\begin{split}
&\tilde\mu_{\{n\}} = \tilde{\mu}_{\{n+1\}} = \mu_{\{n\}},\\
&\tilde\mu_{\{t\}} = \mu_{\{t\}} \quad \textrm{ for } \; t\neq n, n+1.
\end{split}
\end{align}
 
Let $\gamma_{ij}, 1\leq i,j\leq n$ and $\tilde{\gamma}_{ij}, 1\leq i,j\leq n+1$ be the standard cords for $B$ and $\tilde{B}$. Note that $B$ and $\tilde{B}$ don't have the same number of strands, and the indices for the two sets of framed cords are different. For $i,j\neq n,n+1$,
\begin{align}
\tilde\gamma_{ij} &= \gamma_{ij}, &\nonumber\\
\tilde\gamma_{i,n+1} &= \gamma_{in}, & \tilde\gamma_{i,n} &= \gamma_{i,n}\cdot m_n^{(n)},\nonumber\\
\tilde\gamma_{n+1, j} &= \gamma_{nj}, & \tilde\gamma_{n,j} &= (m_n^{(n)})^{-1} \cdot \gamma_{nj}, \\
\tilde{\gamma}_{nn} &= \gamma_{nn}, & \tilde\gamma_{n,n+1}  &= (m_n^{(n)})^{-1}, \nonumber\\
\tilde\gamma_{n+1,n}  &= m_n^{(n)}, & \tilde{\gamma}_{n+1,n+1} &= \gamma_{nn}. \nonumber
\end{align}

\noindent The meridian generators are related by:
\begin{align}\label{meridiansPoSt}
\begin{split}
&\tilde{m}_{n+1} = \tilde{m}_n = m_n, \\
&\tilde{m}_t = m_t \quad \textrm{ for } \; t\neq n,n+1.
\end{split}
\end{align}

We define matrices $R,\tilde{R}$ by $R_{ij} := \epsilon(\gamma_{ij}), \tilde{R}_{ij} := \epsilon(\tilde\gamma_{ij})$, and
$$V := \textrm{Span}_k\{R_j\}_{1\leq j\leq n}, \quad \tilde{V} := \textrm{Span}_k\{\tilde{R}_j\}_{1\leq j\leq n+1}.$$

We first show that $V$ and $\tilde{V}$ have the same rank. Recall the identities $\epsilon(m_{n}^{(n)}) = \mu_{\{n\}}\epsilon(\gamma_{nn})$ and $\epsilon((m_{n}^{(n)})^{-1}) = \mu_{\{n\}}^{-1}\epsilon(\gamma_{nn})$. Hence,
\begin{equation}
\tilde{R} = 
\begin{pmatrix}
\epsilon(\gamma_{11}) & \dotsb & \epsilon(\gamma_{1,n-1}) & \mu_{\{n\}}\epsilon(\gamma_{1n}) & \epsilon(\gamma_{1n}) \\
\vdots & \ddots & \vdots & \vdots &\vdots \\
\epsilon(\gamma_{n-1,1}) & \dotsb &\epsilon(\gamma_{n-1,n-1}) & \mu_{\{n\}}\epsilon(\gamma_{n-1,n}) &  \epsilon(\gamma_{n-1,n})\\
\mu^{-1}_{\{n\}}\epsilon(\gamma_{n1}) & \dotsb & \mu^{-1}_{\{n\}}\epsilon(\gamma_{n, n-1}) & \epsilon(\gamma_{nn}) & \mu^{-1}_{\{n\}}\epsilon(\gamma_{nn})\\
\epsilon(\gamma_{n1}) & \dotsb & \epsilon(\gamma_{n, n-1}) & \mu_{\{n\}}\epsilon(\gamma_{nn}) & \epsilon(\gamma_{nn})
\end{pmatrix}.
\end{equation}
Set $M_L = I_{n+1} - \mu_{\{n\}}E_{n+1,n}$ and $M_R = I_{n+1} - \mu_{\{n\}}^{-1}E_{n,n+1}$, then
$$
M_L \tilde{R} M_R =
\begin{pmatrix}
R & 0 \\
0 & 0
\end{pmatrix}.
$$
Because $M_L$ and $M_R$ are invertible, $V$ and $\tilde{V}$ have the same rank.

Define a linear transform $T: V\rightarrow \tilde{V}$ over spanning vectors:
\begin{align*}
&T(R_n) = \tilde{R}_{n+1},\\
&T(R_j) = \tilde{R}_j, \quad \textrm{ for } \; j\neq n.
\end{align*}
Because of the linear relation $\mu_{\{n\}}^{-1}\tilde{R}_{n} + \tilde{R}_{n+1} = 0$ in $\tilde{V}$, the linear map $T$ is a surjective, and it is an isomorphism of vector spaces. In terms of spanning vectors, $T$ can be written as a $(n+1)\times n$ matrix:
$$
T = 
\begin{pmatrix}
I_{n-1} & 0 \\
0 & 0\\
0 & 1
\end{pmatrix}.
$$

Finally we prove that $T$ is an isomorphism of representations, namely for $1\leq t\leq n$,
\begin{equation*}\label{PNsisorep}
\tilde{\rho}(m_t)\circ T = T\circ \rho(m_t).
\end{equation*}
Let $M_t =\rho(m_t)$ and $\tilde{M}_t = \tilde\rho(m_t)$, there is 
$$M_t = I_n - \sum_{j=1}^n\epsilon(\gamma_{tj})E_{tj}, \quad \tilde{M}_t = I_{n+1} - \sum_{j=1}^{n+1}\epsilon(\tilde{\gamma}_{tj})E_{tj}.$$

If $t\neq n$, then $m_t= \tilde{m_t}$, $\gamma_{tj} = \tilde{\gamma}_{tj}$. It is equivalent to check $(I_{n+1} - \tilde{M}_t)\circ T = T\circ (I_n - M_t)$.  By a straightforward calculation, both hand sides equal to an $(n+1)\times n$ matrix where the $(t,j)$ entry equals to $\epsilon(\gamma_{tj})$ and zero otherwise.

If $t= n$, then we take $m_t = \tilde{m}_{n+1}$. It remains to check $(I_{n+1} - \tilde{M}_{n+1})\circ T = T\circ (I_n - M_n)$. Both hand sides equal to an $(n+1)\times n$ matrix where the $(n+1,j)$ entry equals to $\epsilon(\gamma_{nj})$ and zero otherwise.

The group actions of the meridian generators are compatible on both $V$ and $\tilde{V}$, so is the entire link group $\pi_L$. Therefore, the augmentation representations before and after a positive stabilization are isomorphic. 

\smallskip
The proof for a negative stabilization is similar, which we do not repeat here. For reference, we record some identities and the matrix $\tilde{R}$ after a negative stabilization. For $1\leq i, j \leq n-1$, the framed cords are identified by the following relations
\begin{align}
\tilde\gamma_{ij} &= \gamma_{ij}, \nonumber\\
\tilde{\gamma}_{i,n} &= \tilde\gamma_{i,n+1} = \gamma_{in} \cdot (m_{n}^{(n)})^{-1},\nonumber\\
\tilde\gamma_{n,j} &= \tilde\gamma_{n+1, j} = (m_n^{(n)}) \cdot \gamma_{nj}, \nonumber \\
\tilde{\gamma}_{nn} &= \tilde\gamma_{n,n+1} = \tilde\gamma_{n+1,n}  = \tilde{\gamma}_{n+1,n+1} =\gamma_{nn}, \nonumber
\end{align}
the meridian generators are identified in the same way as in (\ref{meridiansPoSt}), and the matrix $\tilde{R}$ is

\begin{equation}
\tilde{R} = 
\begin{pmatrix}
\epsilon(\gamma_{11}) & \dotsb & \epsilon(\gamma_{1,n-1}) & \mu_{\{n\}}^{-1}\epsilon(\gamma_{1n}) & \mu_{\{n\}}^{-1}\epsilon(\gamma_{1n}) \\
\vdots & \ddots & \vdots & \vdots &\vdots \\
\epsilon(\gamma_{n-1,1}) & \dotsb &\epsilon(\gamma_{n-1,n-1}) & \mu_{\{n\}}^{-1}\epsilon(\gamma_{n-1,n}) &  \mu_{\{n\}}^{-1}\epsilon(\gamma_{n-1,n})\\
\mu_{\{n\}}\epsilon(\gamma_{n1}) & \dotsb & \mu_{\{n\}}\epsilon(\gamma_{n, n-1}) & \epsilon(\gamma_{nn}) &\epsilon(\gamma_{nn})\\
\mu_{\{n\}}\epsilon(\gamma_{n1}) & \dotsb & \mu_{\{n\}}\epsilon(\gamma_{n, n-1}) & \epsilon(\gamma_{nn}) & \epsilon(\gamma_{nn})
\end{pmatrix}.
\end{equation}

\smallskip
So far we have proven that the augmentation representation constructed from a braid is invariant under either a positive or a negative stabilization.

\medskip
\textbf{Summary.}
In previous arguments, we showed that the augmentation representation constructed from a braid is invariant under a conjugation or a positive/negative stabilization. Together with the case of braid equivalences, we can apply Markov's theorem and prove that the augmentation representation is well-defined up to isomorphism. In particular, it is independent from the choice of the braid representative of the link. 
\end{proof}

\begin{rmk}
From the perspective of knot contact homology, a based loop concatenated by capping paths models the boundary of a holomorphic disk with one puncture asymptotic to the Reeb chord (or the framed cord), motivating the construction in the theorem.
\end{rmk}

\begin{rmk}
In this remark, we compare the construction with those in \cite{Co, Ga2}, explaining what has been generalized.

When $K$ is a knot, we can either write it as a braid closure and apply Theorem-Definition \ref{MainConstruction} to construct the augmentation representation for $\epsilon: \mathrm{Cord}(K)\rightarrow k$, or we can choose a set of meridian generators and apply \cite{Co} or \cite[Propsition 4.11]{Ga2} to construct a representation for $\epsilon: \cP_K\rightarrow k$.

Both constructions require additional data, and in both cases, the representation does not depend on additional data. The arguments for independence are different. In this paper, we prove that Markov moves give isomorphic representations. In \cite{Co} or \cite{Ga2}, the independence of the generating set of meridians was proven either by the irreducibility of the representation, or by a classification of   simple sheaves microsupported along the knot conormal.

Now we explain how the current paper manifests the underlying geometry of the construction in \cite{Co, Ga2}. It is not an immediate identification of the two constructions using the $\bbZ$-algebra isomorphism $\textrm{Cord}^c(K) \cong \cP_K$ in Remark \ref{3fcas}. Thanks to the fact that the representation does not depend on the additional data in either constructions, we can make a preferred choice for the position of the knot as well as the set of meridian generators.

Suppose $K$ is the braid closure of an $n$-strand braid $B$ and let $\{m_t\}_{1\leq t\leq n}$ be the set of meridian generators in the configuration disk $D$ associated to $B$. We choose $x_1\in D$ to be the base point of $\pi_L$, then our preferred set of meridian generators is
$$\{m_t^{(1)}\}_{1\leq t\leq n}.$$
Since any two meridians are conjugate to each other in the knot group, there exists $g_t\in \pi_L$ for each $t$, such that $m_t^{(1)} = g_t^{-1}\cdot m_1^{(1)}\cdot g_t$.
\end{rmk}

Both constructions starts with a square matrix $R$ of size $n\times n$,
$$(R_{\textrm{Cord}(K)})_{ij} = \epsilon_{\textrm{Cord}(K)}(\gamma_{ij}), \qquad (R_{\cP_K})_{ij} = \epsilon_{\cP_K}(g_i\cdot g_j^{-1}).$$

\noindent We present an entry-wise identification between the two matrices. Recall that $\ast \in \ell$ is the marked point used in the longitude relation in the cord algebra. For each $t\in \{1,\dotsb, n\}$, there is a unique path $d_{t1}$ from $x_t$ to $x_1$ that is contained in $\ell \setminus \{\ast\}$ along the natural orientation of $\ell$. There are
$$m_t^{(1)} = \gamma_{1t} \cdot m_t^{(t)}\cdot \gamma_{t1},\quad m_t^{(t)} = d_{t1} \cdot m_1^{(1)}\cdot d_{t1}^{-1}.$$
Combining these two equations with $m_t^{(1)} = g_t^{-1}\cdot m_1^{(1)}\cdot g_t$, we get
\begin{equation}\label{gtchoice}
g_t = (\gamma_{1t}\cdot d_{t1})^{-1}.
\end{equation}
Note that the $g_t$ here is a choice we made which is not unique in general. For example, the longitude commutes with the meridian in the peripheral subgroup. The concatenation of a choice $g_t$ with the longitude gives another choice. Fix the chosen $g_t$ in (\ref{gtchoice}), we have
$$g_i\cdot g_j^{-1} = (\gamma_{1i}\cdot d_{i1})^{-1}\cdot (\gamma_{1j}\cdot d_{j1}) = d_{i1}^{-1}\cdot \gamma_{ij}\cdot d_{j1}.$$
Note that if $c_{ij}$ is a framed cord, then $d_{i1}^{-1}\cdot c_{ij}\cdot d_{j1}$ is a based loop in $\pi_L$. Moreover, $[c_{ij}]\mapsto [d_{i1}^{-1}\cdot c_{ij}\cdot d_{j1}]$ defines an isomorphism $\textrm{Cord}(K) \xrightarrow {\sim} \cP_K$. In particular, there is
$$\epsilon_{\cP_K}(g_i\cdot g_j^{-1}) = \epsilon_{\textrm{Cord}(K)}(\gamma_{ij}),$$
proving $R_{\textrm{Cord}(K)} = R_{\cP_K}$. It is similar to check that the group actions also match. 

\section{A microlocal digression}\label{Sec:micro}

We defined the augmentation representation in the previous section. It is a representation of the fundamental group of the link complement, which is equivalent to a local system on the link complement. We will study properties of the augmentation representation in the next section. It is easier to understand these properties from the perspective of microlocal sheaf theory. Indeed, a local system can be viewed as a locally constant sheaf, and we can study it with microlocal methods. In this section, we review some microlocal sheaf theory.

Let $k$ be a commutative field. Let $Y$ be a smooth manifold. Let ${Mod}(Y)$ be the abelian category of sheaves of $k$-modules on $Y$, and $Sh(Y)$ be the bounded dg derived category. For any object $\cF\in Sh(Y)$, its microsupport $SS(\cF)\subset T^*Y$ is a closed conic subset, point-wisely defined as follows (also see \cite[Definition 5.1.1]{KS}).

\begin{defn}
Let $\cF\in Sh(Y)$ and let $p = (y_0,\xi_0)\in T^*Y$. We say that $ p \notin {SS}(\cF)$ if there exists an open neighborhood $U$ of $p$ such that for any $y\in Y$ and any real $C^1$ function $\phi$ on $Y$ satisfying $d\phi(y_0) \in U$ and $\phi(y_0)=0$, we have
\begin{equation}\label{microsupport}
R\Gamma_{\{\phi(y)\geq 0\}}(\cF)_{y_0}\cong 0.
\end{equation}
\end{defn}

Kashiwara-Schapira defined the microlocal hom bifunctor \cite[Definition 4.1.1]{KS}
\begin{equation*}\label{muhom}
\mu hom: Sh(Y)^{op}\otimes Sh(Y)\rightarrow Sh(T^*Y),
\end{equation*}
giving a quantitative description of the micro-support. Suppose $\cF,\cG \in Sh(Y)$, then 
\begin{equation*}\label{muhomSSbound}
\textrm{supp } \mu hom (\cF,\cG) \subset {SS}(\cF)\cap {SS}(\cG).
\end{equation*}

Let $T^\infty Y$ be the unit cosphere bundle of $Y$. It admits a natural contact form induced from the canonical form of the cotangent bundle. Let $\Lambda\subset T^\infty Y$ be a (not necessarily connected) smooth Legendrian submanifold. Define a dg subcategory
$$Sh_\Lambda(Y) := \{\cF \in Sh(Y) \,|\, SS(\cF)\cap T^\infty Y \subset \Lambda\}.$$
Following \cite{GKS}, $Sh_\Lambda(Y)$ is a Legendrian isotopy invariant. The abelian category $Mod(Y)$ can be viewed as a dg subcategory of $Sh(Y)$ consisting of objects which are concentrated in homological degree zero. Define $Mod_\Lambda(Y) := Sh_\Lambda(Y)\cap Mod(Y)$.

A sheaf $\cF \in Sh_\Lambda(Y)$ is simple along $\Lambda$ if one of the following equivalent conditions holds:
\begin{itemize}
\item
For any $p\in \Lambda$, the microlocal Morse cone appeared in (\ref{microsupport}) has rank $1$, namely
$$R\Gamma_{\{\phi(y)\geq 0\}}(\cF)_{y_0} \cong k[d],\quad \textrm{for some }\ d\in \bbZ.$$
\item 
The self microlocal hom restricts to a constant sheaf supported on $\Lambda$, namely
$$\mu hom(\cF,\cF)|_{T^\infty Y} = k_{\Lambda}.$$
\end{itemize}
We write $Sh^s_\Lambda(Y)\subset Sh_\Lambda(Y)$ for the subcategory of simple sheaves along $\Lambda$. The triangulated structure is lost when we pass to this subcategory.

There is a distinction between a sheaf $\cF \in Sh_\Lambda(Y)$ being ``simple along $\Lambda$'' or ``simple along its micro-support''. These two notions are equivalent when $SS(\cF)\cap T^\infty Y = \Lambda$. In the definition $Sh_\Lambda(Y)$, an object $\cF$ is only required to have its micro-support intersecting $T^\infty Y$ in a subset of $\Lambda$. When $\Lambda$ has multiple connected components, such as in our case the conormal tori of links, the first notion is strictly stronger (defining fewer objects).

\smallskip
In this paper, we consider $Y = X = \bbR^3$ or $S^3$, and $\Lambda = \Lambda_L$, the Legendrian conormal tori of a link $L$. Simple sheaves microsupported along $\Lambda$ admit easier descriptions. The description for links is similar to that for knots, which was studied in \cite{Ga2}, because both the micro-support and the simpleness are local properties.

For simplicity, we consider sheaves concentrated in homological degree $0$. The micro-support constraints force that $\cF$ restricted to each component of the link, or the link complement is a local system \cite[Lemma 3.1]{Ga2}. In terms of group representations, these local systems are equivalent to
\begin{align*}
&\rho: \pi_L \rightarrow GL(V), \\
&\rho_i: \bbZ_{K_i} \rightarrow GL(W_i),  \;\;\textrm{for}\;\;  1\leq i\leq r.
\end{align*}
Here $\pi_L$ is the link group and $\bbZ_{K_i} := \pi_1(K_i)$. Conversely, we can reconstruct the sheaf from these local systems by gluing. The gluing data is an element in an extension class, and according to \cite[Lemma 3.2, Lemma 3.3]{Ga2}, it is equivalent to a collection of linear maps
$$T_i: W_i\rightarrow V,   \;\;\textrm{for}\;\; 1\leq i\leq r,$$
which satisfy the following compatibility conditions. For each $1\leq i\leq r$, let $m_i, \ell_i$ be the meridian and longitude in the peripheral subgroup, then $T_i$ satisfies (a) $\rho(\ell_i)\circ T_i = T_i\circ \rho_i(K_i)$, and (b) $m_i$ acts on the image of $T_i$ as identity. We conclude that the sheaf $\cF$ is equivalent to the collection of data $(\rho, V, \rho_i, W_i, T_i)$.

A sheaf $\cF$ is simple along $\Lambda$ if and only if $cone(T_i)$ has rank $1$ for each $1\leq i\leq r$. It is simple along its singular support if and only if $cone(T_i)$ has rank at most $1$ for each $1\leq i\leq r$. These statements follow from the first definition of simpleness.

In this paper, we focus on the representation of the link group, namely $(\rho,V)$. The simpleness imposes strong restrictions to this representation. If $cone(T_i)$ has rank $1$, then $T_i$ is either injective with a rank $1$ cokernel, or $T_i$ is surjective with a rank $1$ kernel. Consider the first case when $T_i$ is injective, then $W_i$ can be regarded as a subspace of $V$ of codimension $1$. The condition (b) of $T_i$ requires that $m_i$ acts on a space of codimension one as identity. In the other cases, including when rank$(cone(T_i)) =1$ with surjective $T_i$, or when rank$(cone(T_i)) = 0$, the action on $m_i$ is entirely trivial. Combining these cases, we see that for $\cF$ to be simple, it is necessary for $(\rho,V)$ to satisfy the condition that the action of each meridian $m_i$ fixes a subspace of codimension at most $1$.

\section{Properties of the augmentation representation}\label{Sec:Properties}

In this section, we study properties of the augmentation representation.

\subsection{Microlocal simpleness}

Let $Mod_{\Lambda_L}(X) = Sh_{\Lambda_L}(X)\cap Mod(X)$, and let $Mod^s_{\Lambda_L}(X) \subset Mod_{\Lambda_L}(X)$ be the full subcategory of simple sheaves. Suppose $\cF \in Mod^s_{\Lambda_L}(X)$, then $j^{-1}\cF$ is a local system on $X\setminus L$. The local system is equivalent to a representation $\rho: \pi_L\rightarrow GL(V)$. The simpleness of $\cF$ requires that any meridian acts on $V$ as identity on a subspace of codimension $1$ or $0$. Based on this observation, we make the following notion of simpleness.

\begin{defn}
Let $V$ be a vector space. A linear automorphism $T \in GL(V)$ is \textit{almost identity} if there is a subspace $W\subset V$ of codimension $1$ such that $T|_W = \mathrm{id}_W$.

Equivalently, $T$ is almost identity if and only if the rank of $(\mathrm{id}_V - T)$ is at most $1$.
\end{defn}

\begin{defn}
Suppose $L \subset X$ is a link. A link group representation
$$\rho: \pi_L \rightarrow GL(V)$$
is \textit{microlocally simple} if $\rho(m)$ is almost identity for every meridian $m\in \pi_L$.
\end{defn}

\begin{prop}\label{propSimpleness}
Augmentation representations are microlocally simple.
\end{prop}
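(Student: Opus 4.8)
The plan is to reduce the claim to the generating meridians $m_t$, where Lemma \ref{meridanRj} supplies an explicit formula, and then to extend to an arbitrary meridian by conjugation invariance.

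First I would handle a generating meridian $m_t$. By Lemma \ref{meridanRj}, for every spanning vector $R_j$ one has $(\textrm{id}_{V_\epsilon} - \rho_\epsilon(m_t)) R_j = \epsilon(\gamma_{tj}) R_t$. Since the vectors $R_j$ span $V_\epsilon$, the image of the operator $\textrm{id}_{V_\epsilon} - \rho_\epsilon(m_t)$ is contained in $\textrm{Span}_k\{R_t\}$, which is at most one-dimensional. Hence $\textrm{rank}(\textrm{id}_{V_\epsilon} - \rho_\epsilon(m_t)) \leq 1$, and $\rho_\epsilon(m_t)$ is almost identity in the sense of the preceding definition.

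Next I would pass to an arbitrary meridian $m$. Every meridian of $K$ is a meridian of some component $K_i$, and any two meridians of a given component are conjugate in $\pi_K$; as the generating set $\{m_t\}_{1\leq t\leq n}$ contains at least one meridian of each component, we may write $m = g^{-1}\cdot m_t\cdot g$ for some $g\in \pi_K$ and some generating meridian $m_t$. Since $\rho_\epsilon$ is a homomorphism, $\textrm{id}_{V_\epsilon} - \rho_\epsilon(m) = \rho_\epsilon(g)^{-1}(\textrm{id}_{V_\epsilon} - \rho_\epsilon(m_t))\rho_\epsilon(g)$, which has the same rank as $\textrm{id}_{V_\epsilon} - \rho_\epsilon(m_t)$. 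Therefore $\textrm{rank}(\textrm{id}_{V_\epsilon} - \rho_\epsilon(m)) \leq 1$, so $\rho_\epsilon(m)$ is almost identity, as required.

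The main content is entirely carried by Lemma \ref{meridanRj}, so there is no genuine obstacle. The only step requiring care is the reduction to generating meridians, which rests on the standard fact, recalled from the Wirtinger presentation, that the meridians of a fixed component form a single conjugacy class in the link group, together with the elementary observation that the rank of $\textrm{id}_{V_\epsilon} - (\cdot)$ is invariant under conjugation.
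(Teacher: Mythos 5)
Your proof follows the same route as the paper's: reduce to the generating meridians by conjugacy, then use Lemma \ref{meridanRj} to see that the image of $\textrm{id}_{V_\epsilon} - \rho_\epsilon(m_t)$ lies in $\textrm{Span}_k\{R_t\}$. That part is correct, and the conjugation-invariance of $\textrm{rank}(\textrm{id}_{V_\epsilon} - \rho_\epsilon(\cdot))$ is a clean way to phrase the reduction.

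There is, however, one inaccuracy in the reduction step. Meridians of a fixed component form a single conjugacy class in $\pi_K$ only when they are \emph{coherently oriented}. A meridian traversed in the opposite direction is conjugate to $m_t^{-1}$, never to $m_t$: in the abelianization $H_1(X\setminus K)$ the two have opposite classes, so no conjugation can relate them. Since the definition of microlocal simpleness quantifies over every meridian $m\in\pi_K$, and the paper's notion of meridian (boundary of a transverse disk) carries no preferred orientation, your argument as written does not cover the reversed meridians. This is why the paper's proof explicitly checks that $\rho_\epsilon(m_i)^{\pm 1}$ are \emph{both} almost identity. The repair is immediate in your framework: if $T$ is almost identity with $T|_W = \textrm{id}_W$ for $W$ of codimension $\leq 1$, then also $T^{-1}|_W = \textrm{id}_W$, so $T^{-1}$ is almost identity (equivalently, $\textrm{id}-T^{-1} = T^{-1}(T-\textrm{id})$ has the same rank as $\textrm{id}-T$); alternatively, the second identity of Lemma \ref{meridanRj}, $\rho_\epsilon(m_t^{-1})R_j = R_j + \mu_{\{t\}}^{-1}\epsilon(\gamma_{tj})R_t$, handles $m_t^{-1}$ directly. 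With that one sentence added, your proof is complete and coincides with the paper's.
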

\begin{proof}
Let $(\rho_\epsilon, V_\epsilon)$ be the augmentation representation associated to an augmentation $\epsilon: \mathrm{Cord}(L)\rightarrow k$. Recall that $R$ is the $n\times n$ matrix determined by $\epsilon$. As a vector space, $V_\epsilon$ is spanned by the column vectors $R_j$, $1\leq j \leq n$.  To verify that a linear automorphism $T \in GL(V_\epsilon)$ is almost identity, it is sufficient to show that $(\textrm{id}_V - T)R_j$ is contained in a rank $1$ subspace of $V$ for all $1\leq j \leq n$.

To prove $(\rho_\epsilon, V_\epsilon)$ is microlocally simple, we need to verify that every $T = \rho_\epsilon(m)$ with $m$ being a meridian is almost identity. Since any two meridians with coherent orientations belonging to the same component $K_i\subset L$ are conjugate, it suffices to select a meridian $m_i$ for each component $K_i$ and prove that $\rho_\epsilon(m_i)^{\pm 1}$ are almost identity. It is easy to see that if $\rho_\epsilon(m_i)$ is almost identity, so is $\rho_\epsilon(m_i)^{-1}$. Finally, recall that $\{m_t\}_{1\leq t\leq n}$ are meridian generators of the link group $\pi_L$. We observe that the generating set contains at least one meridian for each component.

Following these arguments, the assertion in the proposition reduces to show that for any generating meridian $m_t$, there exists a rank $1$ subspace $U_t\subset V_\epsilon$ such that 
$$(\textrm{id}_V-\rho_\epsilon(m_t))R_j \subset U_t, \quad\textrm{for } 1\leq j \leq n.$$
Recall the formula in (\ref{MeridianAction}), $\rho_\epsilon(m_t) R_j = R_i - \epsilon({p}_t^{-1}\cdot p_j) R_t$. We take $U_t = \textrm{Span}_k\{R_t\}$, then
$$(\textrm{id}_V -\rho_\epsilon(m_t))R_j = R_j - (R_i - \epsilon({p}_t^{-1}\cdot p_j) R_t) = \epsilon({p}_t^{-1}\cdot p_j) R_t \subset U_t.$$

We prove the desired result.
\end{proof}

\begin{rmk}
Microlocally simple knot group representations are called ``KCH representations'' or ``unipotent KCH representations'' in earlier papers \cite{Ng4,Co, Ga2}. We stop using these names for two reasons. First, the abbreviation ``KCH'' as a prefix of the representation emphasizes more on contact topology and its relation to sheaf theory, instead of the property of the representation itself. It is more revealing to borrow the notion of simpleness from microlocal sheaf theory. Second, it is a locally property whether a meridian action is diagonalizable or unipotent. When we work with links, the previous naming system fails to generalize in a concise way when meridians from different link components act differently. On the other hand, the word ``microlocal'' implies to consider each component separately.
\end{rmk}

\begin{rmk}
We answer an interesting question from Emmanuel Giroux. Consider the rank $1$ trivial representation of the link group. It is microlocally simple, but does not come from an augmentation. Indeed, if the action of any loop is trivial, in particular the action of any meridian generator is trivial, then augmented values of standard cords must all vanish (by \eqref{MeridianAction1}). As a consequence, there must be $V_\epsilon =\{0\}$ and the augmentation representation is a zero representation, not the rank $1$ trivial representation.

To resolve this issue, we need to look into the microlocal sheaf category. Consider the case when $K$ is a knot and we use the main result in \cite{Ga2} as an example. If all standard cords are augmented to zero, then the augmentation defines a sheaf $i_*\cG_{\epsilon(\lambda)}[-1]$ where $i: K\rightarrow X$ is the closed embedding and $\cG_{\epsilon(\lambda)}$ is a rank $1$ local system on $K$ with monodromy $\epsilon(\lambda)$. If $\epsilon(\lambda) =1$, then $\cG_{\epsilon(\lambda)} = k_K$ and there is distinguished triangle in $Sh(X)$:
$$i_*k_K[-1] \rightarrow j_! k_{X\setminus K}\rightarrow k_X\xrightarrow{+1},$$
here $j: X\setminus K\rightarrow X$ is the open embedding. The sheaf $j_! k_{X\setminus K}$ is equivalent to the rank $1$ trivial representation of $\pi_L$. The distinguished triangle yields that $i_*\cG_{\epsilon(\lambda)}[-1]$ and $j_! k_{X\setminus K}$ are isomorphic up to locally constant sheaves on $X$.
\end{rmk}

\subsection{Vanishing}

Suppose $L_0\subset L$ is a sublink. Let $\pi_{L_0} = \pi_1(X\setminus L_0)$. Note this fundamental group is taken over the complement of $L_0$ in $X$, forgetting that $L_0$ is a sublink of $L$. The natural open inclusion $j: X\setminus L\rightarrow X\setminus L_0$ induces a map on their fundamental groups:
$$\pi_L\rightarrow \pi_{L_0}.$$
It induces a functor between abelian categories $Rep(\pi_{L_0})\rightarrow Rep(\pi_L)$. If we identify representations of a fundamental group as local systems, this functor coincides with the pull back functor $j^{-1}: loc(X\setminus L_0)\rightarrow loc(X\setminus L)$.

In the previous subsection, we proved that augmentations are microlocally simple. It follows from the definition that the group action of a meridian has two possibilities. It either fixes the entire vector space, or defines an invariant subspace of codimension one. In this subsection, we give a sufficient condition such that the associated augmentation representations fit into the first case.

\begin{defn}
Let $L\subset X$ be a link and $L_0\subset L$ be a sublink. A simple link representation $\rho: \pi_L\rightarrow GL(V)$ vanishes on $L_0$ if $\rho(m_0) = \textrm{id}_V$ for any meridian $m_0$ of $L_0$.
\end{defn}

The definition comes from an observation in the sheaf theory. Let $\cE\in loc(X\setminus L)$ be the local system determined by a link group representation $(\rho,V)$. Let $j: X\setminus L\rightarrow X$ be the open embedding. We consider the underived push forward $\cF = j_*\cE$. If $(\rho,V)$ vanishes on $L_0$, then $\cF$ is microsupported on the component of $\Lambda_{L_0}$, i.e.
$$SS(\cF) \cap T^\infty X \subset  \Lambda_{L\setminus L_0}.$$

\begin{prop}\label{propVanishing}
Let $L \subset X$ be a link and $L_0 \subset K$ a sublink. Suppose an augmentation $\epsilon: \mathrm{Cord}(L)\rightarrow k$ satisfies either one the following conditions:
\begin{itemize}
\item
for any framed cord $c$ starting from $L_0$, $\epsilon(c) = 0$; or
\item 
for any framed cord $c$ ending on $L_0$, $\epsilon(c) = 0$,
\end{itemize}
then the associated augmentation representation $(\rho_\epsilon, V_\epsilon)$ vanishes on $L_0$.
\end{prop}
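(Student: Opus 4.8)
The plan is to reduce everything to the explicit action formula of Lemma \ref{meridanRj} and then exploit the two vanishing hypotheses in a complementary fashion. The key input is the identity $\rho_\epsilon(m_t) R_j = R_j - \epsilon(\gamma_{tj}) R_t$, which reduces the claim $\rho_\epsilon(m_t) = \textrm{id}$ to showing that the correction term $\epsilon(\gamma_{tj}) R_t$ vanishes for every $j$.

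First I would carry out a reduction so that it suffices to treat the generating meridians $m_t$ attached to strands $t$ belonging to $K'$. Every component of $K'$ is a component $K_{\{t\}}$ of $K$, and since $K$ is presented as a braid closure, that component contains at least one strand $t$; hence it carries at least one generating meridian in the set $\{m_t\}_{1\leq t\leq n}$. Moreover, using the recorded fact that $\pi_K$ has exactly one meridian conjugacy class per component of $K$, any meridian $m'$ of a component of $K'$ is conjugate in $\pi_K$ to such a generating meridian $m_t$; and since $\rho_\epsilon(g\, m_t\, g^{-1}) = \rho_\epsilon(g)\,\rho_\epsilon(m_t)\,\rho_\epsilon(g)^{-1}$ equals the identity whenever $\rho_\epsilon(m_t)$ does, proving $\rho_\epsilon(m_t) = \textrm{id}_{V_\epsilon}$ for these $t$ settles the proposition for every $m'$.

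Next I would treat the two hypotheses. Fix a strand $t$ lying in $K'$. Under the first hypothesis, the standard cord $\gamma_{tj} = p_t^{-1}\cdot p_j$ starts at $x_t$, a point on the framing of $K_{\{t\}}\subset K'$, so $\gamma_{tj}$ is a framed cord \emph{starting from} $K'$ and therefore $\epsilon(\gamma_{tj}) = 0$ for all $j$; the correction term vanishes outright. Under the second hypothesis I would instead examine the column vector $R_t = \bigl(\epsilon(\gamma_{1t}),\dots,\epsilon(\gamma_{nt})\bigr)^{\mathrm{T}}$: each entry $\epsilon(\gamma_{it})$ is the augmented value of a cord $\gamma_{it}$ \emph{ending on} $x_t\in K'$, so every entry is zero, giving $R_t = 0$ and hence $\epsilon(\gamma_{tj})R_t = 0$. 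In either case $\rho_\epsilon(m_t) R_j = R_j$ for all $j$, so $\rho_\epsilon(m_t) = \textrm{id}_{V_\epsilon}$.

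I do not expect a serious obstacle: the substantive content lives entirely in Lemma \ref{meridanRj}, and the rest is bookkeeping about which strands index cords that start from or end on $K'$. The one point deserving care is the reduction in the second paragraph, namely checking that each component of $K'$ is covered by a generating meridian and that the conjugation-invariance step is legitimate for meridians of $K'$ viewed inside $\pi_K$ (rather than inside $\pi_{K'}$); I would make this explicit by appealing to the per-component meridian conjugacy classes of $\pi_K$.
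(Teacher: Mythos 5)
Your proposal is correct and follows essentially the same route as the paper: both reduce to the generating meridians $m_t$ with strand $t$ in $K'$, invoke Lemma \ref{meridanRj}, and then observe that the first hypothesis kills the coefficient $\epsilon(\gamma_{tj})$ while the second kills the column vector $R_t$. The only (harmless) differences are presentational: you make the conjugacy-class reduction explicit where the paper leaves it implicit, and you skip the sorting of strands via Lemma \ref{sortingLem}, which the paper uses purely for notational convenience.
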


\begin{proof}
By Lemma \ref{sortingLem}, we can assume there is an integer $s$ with $1\leq s\leq n$, such that the closure of strands $\{1,\dotsb, s\}$ is precisely the sublink $L_0$.

By definition, the associated augmentation representation vanishes on $L_0$ if for any meridian $m_t$, $1\leq t\leq s$, the action $\rho_\epsilon(m_t)$ is identity. Recall from (\ref{MeridianAction}) that 
$$\rho_\epsilon(m_t) R_j = R_j- \epsilon({p}_t^{-1}\cdot p_j) R_t.$$
It suffices to show that either $\epsilon({p}_t^{-1}\cdot p_j) = 0$ for any $j \in \{1,\dotsb, n\}$, or $R_t$ is the zero vector.

Suppose $\epsilon$ maps all cords starting on $L_0$ to zero. For any $t\in \{1,\dotsb, s\}$ and any $j \in \{1,\dotsb, n\}$, $p_t^{-1}\cdot p_j$ is a cord starting on $L_0$. Hence $\epsilon({p}_t^{-1}\cdot p_j) = 0$ and the assertion follows.

Suppose $\epsilon$ maps all cords ending on $L_0$ to zero. For any $t\in \{1,\dotsb, s\}$, $R_t$ consists of augmentations of cords ending on $L_0$. Therefore $R_t$ is a zero vector as expected.

We prove the assertion in both cases. 
\end{proof}

\subsection{Separability}

In this section, we give a sufficient condition for which the augmentation representation is separable. We begin with an example to motivate this notion.

Consider the two-component unlink and the Hopf link (Figure \ref{Fig:UnlinkandHopf}). Each of them has two unknotted components. A link is \textit{split} if it is the union of two sublinks that lie in two disjoint solid balls. In our example, the unlink is split while the Hopf link is non-split.

\begin{figure}[h]
	\centering
	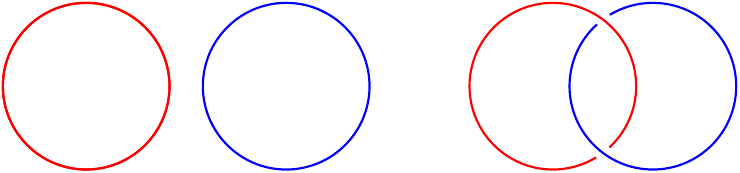
	\caption[]{Two-component unlink (left) and the Hopf link (right).}
	\label{Fig:UnlinkandHopf}
\end{figure}

In either case, the framed cord algebra is generated over $\bbZ[\mu_1^{\pm 1}, \lambda_2^{\pm 1}, \mu_2^{\pm 1},\lambda_2^{\pm 1}]$ by four cords, $a_{11}, a_{12}, a_{21}, a_{22}$. The subscripts label the link components where the end points of a cord belong. (They are not standard cords, which is why we did not use $\gamma_{ij}$.) In $\textrm{Cord}^c(L_{\textrm{Hopf}})$, generators are related by:
\begin{align*}
& (\lambda_1\mu_1\lambda_2^{-1}\mu_2^{-1}-1)a_{12}=0,\\
& a_{21}(1-\lambda_1^{-1}\mu_1^{-1}\lambda_2\mu_2)=0,\\
& 1-\lambda_1-\mu_1 +\lambda_1\mu_1+\lambda_1\mu_1\mu_2^{-1}a_{12}a_{21}=0,\\
& 1 -\lambda_2-\mu_2 +\lambda_2\mu_2 + \lambda_2a_{21}a_{12}=0, \\ 
& a_{12} +\lambda_1\mu_2^{-1}a_{12}(\mu_1\mu_2 - \mu_1 - \mu_2 + \mu_1a_{12}a_{21})=0, \\
& \lambda_2a_{21}-\mu_1a_{21}=0.
\end{align*}
Pure cords do not appear in these equations, because for the unlink and the Hopf link, any pure cord can be homotopic to a constant cord, and then replaced by $1-\mu_{\{i\}}$ using the meridian relation. An augmentation $\epsilon: {\textrm{Cord}^c(L_{\textrm{Hopf}})}\rightarrow k$ is in one the following two non-exclusive cases (we abbreviate $\epsilon(\mu_i),\epsilon(\lambda_i)$ as $\mu_i,\lambda_i$):
\begin{enumerate}
\item[(A)]
$\mu_1 = \lambda_2$, $\mu_2 = \lambda_1$, and $\epsilon(a_{12})\epsilon(a_{21}) = (1-\mu_1^{-1})(1-\mu_2)$; or
\item[(B)]
$\epsilon(a_{12}) = \epsilon(a_{21}) = 0$, and $1 - \lambda_1 - \mu_1 + \lambda_1\mu_1 = 1 - \lambda_2 - \mu_2 +\lambda_2\mu_2 = 0$.
\end{enumerate}
Recall the augmentation variety in \cite{Ng4},
$$V_K =\big\{\big(\epsilon(\mu_1), \epsilon(\lambda_1),\dotsb, \epsilon(\mu_r), \epsilon(\lambda_r)\big)\,|\, \epsilon: {\textrm{Cord}^c(K_{\textrm{Hopf}})}\rightarrow k\big\}\subset (k^*)^{2r}.$$
Therefore the augmentation variety for the Hopf link is
$$V_{\textrm{Hopf}} 
=\left \{
\begin{matrix}
\mu_1 = \lambda_2 \\
\mu_2 = \lambda_1
\end{matrix}
\right \}
\cup
\left \{
\begin{matrix}
1 - \lambda_1 - \mu_1 + \lambda_1\mu_1 =0\\
1 - \lambda_2 - \mu_2 +\lambda_2\mu_2 =0
\end{matrix}
\right\} = V_A \cup V_B.
$$
We can similarly compute the augmentation variety for the two-component unlink
$$V_{\textrm{unlink}} 
=
\left \{
\begin{matrix}
1 - \lambda_1 - \mu_1 + \lambda_1\mu_1 =0\\
1 - \lambda_2 - \mu_2 +\lambda_2\mu_2 =0
\end{matrix}
\right\} =  V_B.
$$

We observe that the augmentation variety of the unlink is contained in that of Hopf link, corresponding to case (B) when $\epsilon(a_{12}) = \epsilon(a_{21}) =0$. In this case, all mixed cords are augmented to zero. Though the Hopf link is non-split, some augmentations behave as if the framed cord algebra came from a split link. We remark that the idea of sending mixed cords to $0$ has been considered in contact geometry such as \cite{Mi, AENV}.

\smallskip
We propose a counterpart of this phenomenon on the sheaf side. The following lemma shows that the direct sum of two simple sheaves which are microsupported along disjoint Legendrians is again simple. 

\begin{lem}\label{DirectSum}
Let $Y$ be a manifold, and $\Lambda_1,\Lambda_2\subset T^\infty Y$ two disjoint Legendrian submanifolds. If $\cF_1 \in Sh^{s}_{\Lambda_1}(Y), \cF_2 \in Sh^{s}_{\Lambda_2}(Y)$, then $\cF_1\oplus \cF_2 \in Sh^{s}_{\Lambda_1\sqcup\Lambda_2}(Y)$.
\end{lem}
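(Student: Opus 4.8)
The plan is to verify the two defining conditions of $D^{b,s}_{\Lambda_1\sqcup\Lambda_2}(Y)$ separately: first the microsupport estimate $SS(\cF_1\oplus\cF_2)\cap T^\infty Y\subset \Lambda_1\sqcup\Lambda_2$, and then simpleness along $\Lambda_1\sqcup\Lambda_2$. For the microsupport I would use the elementary fact that $R\Gamma_{\{\phi\geq 0\}}$ commutes with finite direct sums, so that a point $p$ fails to lie in $SS(\cF_1\oplus\cF_2)$ exactly when it fails to lie in both $SS(\cF_1)$ and $SS(\cF_2)$; this gives $SS(\cF_1\oplus\cF_2)=SS(\cF_1)\cup SS(\cF_2)$. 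Intersecting with $T^\infty Y$ and using $SS(\cF_i)\cap T^\infty Y\subset\Lambda_i$ yields the containment, since $\Lambda_1\cup\Lambda_2=\Lambda_1\sqcup\Lambda_2$ by disjointness.

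For simpleness I would check the microlocal Morse cone pointwise. Fix $p=(y_0,\xi_0)\in\Lambda_1\sqcup\Lambda_2$; by disjointness it lies in exactly one stratum, say $p\in\Lambda_1$. Choosing a test function $\phi$ with $d\phi(y_0)=\xi_0$ and $\phi(y_0)=0$, the additivity of $R\Gamma_{\{\phi\geq0\}}(-)_{y_0}$ splits the Morse cone as
$$R\Gamma_{\{\phi\geq0\}}(\cF_1\oplus\cF_2)_{y_0}\cong R\Gamma_{\{\phi\geq0\}}(\cF_1)_{y_0}\oplus R\Gamma_{\{\phi\geq0\}}(\cF_2)_{y_0}.$$
The first summand is $k[d]$ for some $d\in\bbZ$ because $\cF_1$ is simple along $\Lambda_1$. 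The second summand vanishes: since $p\in\Lambda_1$ and $\Lambda_1\cap\Lambda_2=\emptyset$ we have $p\notin\Lambda_2$, whence $p\notin SS(\cF_2)$ because $SS(\cF_2)\cap T^\infty Y\subset\Lambda_2$, and the defining property of the microsupport forces $R\Gamma_{\{\phi\geq0\}}(\cF_2)_{y_0}\cong 0$ for such $\phi$. Hence the Morse cone of $\cF_1\oplus\cF_2$ at $p$ is $k[d]$, of rank $1$; the symmetric argument handles $p\in\Lambda_2$, establishing simpleness.

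A slicker alternative for the simpleness step is to compute $\mu hom(\cF_1\oplus\cF_2,\cF_1\oplus\cF_2)=\bigoplus_{i,j}\mu hom(\cF_i,\cF_j)$ and restrict to $T^\infty Y$: the cross terms $\mu hom(\cF_i,\cF_j)$ with $i\neq j$ are supported in $SS(\cF_i)\cap SS(\cF_j)$, which meets $T^\infty Y$ in $\Lambda_i\cap\Lambda_j=\emptyset$ and hence vanish there, while the diagonal terms restrict to $k_{\Lambda_i}$ by simpleness, giving $k_{\Lambda_1}\oplus k_{\Lambda_2}=k_{\Lambda_1\sqcup\Lambda_2}$. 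The only real point requiring care in either route is the use of disjointness to kill the contribution of the ``other'' Legendrian at each point of the microsupport; without it one would pick up rank $2$ (or a nontrivial cross term) along $\Lambda_1\cap\Lambda_2$ and the conclusion would fail. Everything else is formal additivity of the relevant functors.
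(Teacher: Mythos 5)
Your proposal is correct, and it actually contains the paper's proof inside it: what you call the ``slicker alternative'' \emph{is} the paper's entire argument. The paper proves the lemma in three lines via $\mu hom$: the cross terms $\mu hom(\cF_i,\cF_j)|_{T^\infty Y}$, $i\neq j$, vanish because $\textrm{supp}\,\mu hom(\cF_i,\cF_j)\subset SS(\cF_i)\cap SS(\cF_j)$ and $\Lambda_1\cap \Lambda_2=\emptyset$, while the diagonal terms restrict to $k_{\Lambda_i}$ by simpleness, so additivity of $\mu hom$ in each argument gives $k_{\Lambda_1}\oplus k_{\Lambda_2}=k_{\Lambda_1\sqcup\Lambda_2}$. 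Your primary route is genuinely different and more elementary: it verifies the \emph{first} of the two equivalent characterizations of simpleness, checking the microlocal Morse cone pointwise, with additivity of $R\Gamma_{\{\phi\geq 0\}}(-)_{y_0}$ splitting the cone and $p\notin SS(\cF_2)$ (from disjointness) killing the second summand. What your pointwise route buys: it needs nothing beyond the definition of $SS$, and it explicitly establishes the microsupport containment $SS(\cF_1\oplus\cF_2)=SS(\cF_1)\cup SS(\cF_2)\subset \Lambda_1\sqcup\Lambda_2$ (away from the zero section), a membership condition the paper's proof leaves implicit since it only verifies the $\mu hom$ identity. What the paper's route buys: compactness and no choices of test functions. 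Both arguments correctly identify disjointness of $\Lambda_1$ and $\Lambda_2$ as the hypothesis doing all the work, exactly as you note at the end.
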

\begin{proof}
Because $\Lambda_1,\Lambda_2$ are disjoint, $\mu hom(\cF_1,\cF_2)|_{T^\infty Y} = \mu hom(\cF_2,\cF_1)|_{T^\infty Y} = 0$. The simpleness of $\cF_i$ yields $\mu hom(\cF_i,\cF_i)|_{T^{\infty}Y} = k_{\Lambda_i}$. Finally we have
\begin{align*}
\mu hom(\cF_1\oplus \cF_2, \cF_1\oplus \cF_2)|_{T^\infty Y} 
&= \mu hom(\cF_1, \cF_1)|_{T^\infty Y} \oplus \mu hom(\cF_2, \cF_2)|_{T^\infty Y} \\
&= k_{\Lambda_1}\oplus k_{\Lambda_2} = k_{\Lambda_1\sqcup \Lambda_2}.
\end{align*}
\end{proof}

We give a sufficient condition when an augmentation representation splits into direct summands.
\begin{defn}
Suppose $L = L_{1} \sqcup L_{2}$ is the union of two sublinks $L_1, L_2$. A link group representation $\rho: \pi_L\rightarrow GL(V)$ is separable with respect to the partition if there exist link group representations $\rho_i: \pi_{K_i} \rightarrow GL(V_i)$ for $i= 1,2$, such that
$$(\rho, V) = (\rho_1,V_1)\oplus (\rho_2, V_2),$$
where $V_i$ is considered as a $\pi_L$-representation through the composition 
$$\pi_L\rightarrow \pi_{L_i}\rightarrow GL(V_i).$$
\end{defn}

\begin{prop}\label{separability}
Suppose $L = L_{1} \sqcup L_{2}$ is the union of two sublinks $L_1, L_2$. If an augmentation $\epsilon: \mathrm{Cord}(L)\rightarrow k$ maps all mixed cord between $L_1$ and $L_2$ to zero, then the induced augmentation representation is separable with respect to the partition.
\end{prop}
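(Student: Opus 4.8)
The plan is to use Lemma \ref{sortingLem} to place the two sublinks into consecutive blocks of strands, observe that the vanishing of mixed cords makes the matrix $R$ block-diagonal, and then read off the splitting of $V_\epsilon$ directly from the meridian action formula (\ref{MeridianAction}).

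First I would represent $K$ as the closure of an $n$-strand braid for which, by Lemma \ref{sortingLem}, there is an integer $s$ so that the strands $\{1,\dots,s\}$ close up to $K_1$ and the strands $\{s+1,\dots,n\}$ close up to $K_2$. With this labelling, a standard cord $\gamma_{ij}$ is a mixed cord between $K_1$ and $K_2$ precisely when exactly one of $i,j$ lies in $\{1,\dots,s\}$. The hypothesis $\epsilon(\gamma)=0$ for all mixed cords then forces $R_{ij}=\epsilon(\gamma_{ij})=0$ whenever $i,j$ lie in different blocks, so $R$ is block-diagonal with blocks $R^{(1)}$ (indices $1,\dots,s$) and $R^{(2)}$ (indices $s+1,\dots,n$). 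Consequently the column vectors split by support: setting $V_1=\textrm{Span}_k\{R_j\}_{1\le j\le s}$ and $V_2=\textrm{Span}_k\{R_j\}_{s<j\le n}$, one has $V_1\subset k^s\times 0$ and $V_2\subset 0\times k^{n-s}$ inside $k^n$, so $V_\epsilon=V_1\oplus V_2$ as vector spaces.

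Next I would verify that $V_1$ and $V_2$ are $\pi_K$-subrepresentations and that this is a decomposition of representations, using (\ref{MeridianAction}). For a generating meridian $m_t$ of $K_1$ (so $t\le s$) and a column $R_j$ with $j>s$, the cord $\gamma_{tj}$ is mixed, hence $\epsilon(\gamma_{tj})=0$ and $\rho_\epsilon(m_t)R_j=R_j$; for $j\le s$ the cord $\gamma_{tj}$ is pure in $K_1$ and $R_t\in V_1$, so $\rho_\epsilon(m_t)R_j\in V_1$. The same computation with the second formula in (\ref{MeridianAction}) handles $m_t^{-1}$. Thus each meridian of $K_1$ preserves $V_1$ and acts as the identity on $V_2$; symmetrically each meridian of $K_2$ preserves $V_2$ and acts as the identity on $V_1$. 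Since the meridians $\{m_t\}_{1\le t\le n}$ generate $\pi_K$, both $V_1$ and $V_2$ are subrepresentations and $(\rho_\epsilon,V_\epsilon)=(\rho_1,V_1)\oplus(\rho_2,V_2)$ as $\pi_K$-representations, where $\rho_i:=\rho_\epsilon|_{V_i}$.

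Finally I would show that $\rho_1$ factors through the quotient map $\pi_K\to\pi_{K_1}$ (and symmetrically for $\rho_2$). Because $V_1$ is a $\pi_K$-subrepresentation and the generating meridians $m_{s+1},\dots,m_n$ of $K_2$ act as the identity on $V_1$, any conjugate $g\,m_t\,g^{-1}$ with $t>s$ also restricts to the identity on $V_1$; hence every meridian of $K_2$ acts trivially on $V_1$. The kernel of the inclusion-induced surjection $\pi_K\to\pi_{K_1}$ is exactly the normal subgroup generated by the meridians of $K_2$, so $\rho_1$ descends to a representation $\rho_1\colon\pi_{K_1}\to GL(V_1)$ whose pullback along $\pi_K\to\pi_{K_1}$ is $\rho_\epsilon|_{V_1}$; the same argument produces $\rho_2\colon\pi_{K_2}\to GL(V_2)$. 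Combined with the previous paragraph, this is exactly the asserted separability. The main obstacle I anticipate is precisely this last step: one must justify that the kernel of $\pi_K\to\pi_{K_1}$ is normally generated by the meridians of $K_2$ and that the strand meridians $m_{s+1},\dots,m_n$ normally generate all of them, so that the trivial action on $V_1$ of these finitely many generators genuinely forces the factoring. The remaining ingredients are just bookkeeping of the block structure of $R$ together with (\ref{MeridianAction}).
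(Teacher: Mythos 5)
Your proposal is correct, and its first half coincides with the paper's own proof: both invoke Lemma \ref{sortingLem}, observe that vanishing of mixed cords makes $R$ block-diagonal so that $V_\epsilon=V_1\oplus V_2$ with $V_1\subset k^s\times 0$ and $V_2\subset 0\times k^{n-s}$, and use (\ref{MeridianAction}) to see that each generating meridian preserves both blocks and acts as the identity on the block of the other sublink. Where you genuinely diverge is in how the two summands become representations of the sublink groups $\pi_{K_i}$, which is what the definition of separability actually demands. The paper builds $\rho_1\colon \pi_{K_1}\to GL(V_1)$ intrinsically: it forms the $s\times s$ matrix of augmented pure cords of $K_1$, defines the action of $\gamma\in\pi_{K_1}$ by the same interpolation formula $\rho_1(\gamma)\tilde R_j=\epsilon(p_\alpha^{-1}\cdot\gamma\cdot p_j)$, re-runs (in sketch) the verification of Theorem-Definition \ref{MainConstruction} --- the subtle point being that $\pi_{K_1}$ has different relations than $\pi_K$ and the skein relations must be taken in $\textrm{Cord}(K_1)$, which is legitimate because killing mixed cords makes $\epsilon$ blind to crossings through $K_2$, so $\epsilon$ descends to an augmentation of $\textrm{Cord}(K_1)$ --- and then checks that the explicit map $\varphi(\tilde R_j)=R_j$ intertwines the actions. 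You instead take the summands to be the restrictions $\rho_\epsilon|_{V_i}$ and descend them along the surjection $\pi_K\to\pi_{K_i}$, using that a conjugate $g m_t g^{-1}$ of a trivially-acting meridian still acts trivially on a $\rho_\epsilon(g)$-invariant subspace, together with the topological fact that $\ker(\pi_K\to\pi_{K_1})$ is the normal closure of the meridians of $K_2$; that fact, which you rightly flag as the point needing justification, is standard: regluing the solid torus neighborhoods of the components of $K_2$ kills, by Seifert--van Kampen, exactly the normal closure of their meridians, and each such component has a strand meridian among $m_{s+1},\dots,m_n$. Your route buys economy: you never have to re-verify well-definedness of the interpolation action over the differently-presented group $\pi_{K_1}$, the step the paper only sketches. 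The paper's route buys more information: it identifies the summands concretely as the augmentation representations of the induced augmentations $\epsilon_i\colon\textrm{Cord}(K_i)\to k$, a refinement recorded in the remark immediately following the proof, which your abstract descent does not directly provide.
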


\begin{proof}
Generally speaking, if $L_1\subset L$ is a sublink, there is no natural morphism $\mathrm{Cord}(L_1)\rightarrow \mathrm{Cord}(L)$. Instead, the natural morphism exists in the opposite direction.
$$\mathrm{Cord}(L)\rightarrow \mathrm{Cord}(L_1).$$

When all mixed cords are augmented to zero, an augmentation $\epsilon: \mathrm{Cord}(L)\rightarrow k$ factors through this morphism, giving an induced augmentation $\epsilon_1: \mathrm{Cord}(L_1)\rightarrow k$. A generic cord in $\mathrm{Cord}(L_1)$ is a pure cord in $\mathrm{Cord}(L)$, except when the cord intersects $L_2$ transversely. In this singular case, the cord can be perturbed away from $L_2$ in two ways, say $c,c'$, and they differ by an interpolation of a meridian $m$ -- there exist two mixed cords $c_{12}$ and $c_{21}$ such that $c = c_{12}\cdot c_{21}$ and $c' = c_{12}\cdot m \cdot c_{21}$. By the skein relation and $\epsilon(c_{12}) = \epsilon(c_{21})=0$, we have
$$\epsilon(c) - \epsilon (c') = \epsilon(c_{12})\epsilon(c_{12})=0.$$
Hence $\epsilon(c)$ is well-defined if we regard $[c]$ as a cord class in $\mathrm{Cord}(L_1)$. Relations in $\mathrm{Cord}(L_1)$ also hold in $\mathrm{Cord}(L)$. Hence the induced augmentation $\epsilon_1: \mathrm{Cord}(L_1)\rightarrow k$ is well-defined.

By Theorem \ref{MainConstruction}, the augmentation $\epsilon_i$ induces an augmentation representation $(\rho_i, V_i)$, $i=1,2$. By the natural morphism $\pi_L\rightarrow \pi_{L_i}$, $V_1\oplus V_2$ is a $\pi_L$-representation.

By Lemma \ref{sortingLem}, we assume there is an integer $s$ such that the closure of strands $P_1 := \{1,\dotsb, s\}$ is $L_1$, and the closure of strands $P_2 :=\{s+1,\dotsb, n\}$ is $L_2$.

Since all mixed cords are augmented to zero, the matrix $R$ reduces to
$$
\begin{pmatrix}
\tilde{R}_1  & 0 \\
0 & \tilde{R}_2
\end{pmatrix}
$$
where $\tilde{R}_1$ has size $s \times s$ and $\tilde{R}_2$ has size $(n-s)\times (n-s)$. 
We can take the configuration disk to construct $(\rho_1, V_1)$ to be a subset of the configuration disk to construct $(\rho, V)$, which yields $V_1 = \textrm{Span}_k\{(\tilde{R}_1)_j\}_{1\leq j\leq s} = \textrm{Span}_k\{{R}_j\}_{1\leq j\leq s}$. Similar for $V_2$. Therefore, $V_\epsilon = V_1\oplus V_2$ as vector spaces. It remains to show that the isomorphism respects $\pi_L$-actions.

Let $m$ be any meridian of $L_1$. Then $\rho_2(m)=0$ because $m$ is contractible in $\pi_{L_2}$, and $\rho_1(m)$ acts on $(\tilde{R}_1)_j$by interpolating $m$ in pure standard cords. For $\rho_\epsilon(m)$, its action on $R_j$, $1\leq j \leq s$ is the same interpolation, and the action on $R_j$, $s+1\leq j\leq n$ is identity because interpolating a meridian of $L_1$ into a pure cord for $L_2$ does not change the augmented value -- same argument as before. Therefore, we conclude $V_\epsilon = V_1\oplus V_2$ as $\pi_L$-representations.

\end{proof}

\begin{rmk}
An augmentation representation may \emph{not} be irreducible. Any example in Proposition \ref{separability} such that $V_1,V_2$ are non-trivial is reducible.
\end{rmk}


\begin{thebibliography}{}

\bibitem[Al]{Al} James Alexander. A lemma on a system of knotted curves.\emph{Proc. Natl. Acad. Sci. USA} \textbf{9} (1923), no. 3, 93--95.

\bibitem[AENV]{AENV} Mina Aganagic, Tobias Ekholm, Lenhard Ng, Cumrun Vafa. Topological strings, D-model, and knot contact homology. \emph{Adv. Theor. Math. Phys.} \textbf{18} (2014), no. 4, 827--956.

\bibitem[AH]{AH} Leila Abdelghani, Michael Heusener. Irreducible representations of knot groups into $\mathrm{SL}(n,\mathbb{C})$. \emph{Publ. Mat.} \textbf{61} (2017), no. 2, 363--394.

\bibitem[BB]{BB} Joan Birman, Tara Brendle. Braids: a survey. Chap. 2 of Handbook of knot theory. Edited by William Menasco and Morwen Thistlethwaite. \emph{Elsevier B. V., Amsterdam,} 2005. x+492 pp..

\bibitem[BZ]{BZ} Steven Boyer, Xingru Zhang. Every nontrivial knot in $S^3$ has nontrivial $A$-polynomial. \emph{Proc. Amer. Math. Soc.} \textbf{133} (2005), no. 9, 2813--2815.

\bibitem[Co]{Co} Christopher Cornwell. KCH representations, augmentations, and $A$-polynomials. \emph{J. Sympl. Geom.} \textbf{15} (2017), no. 4, 983--1017.

\bibitem[CCGLS]{CCGLS} Daryl Cooper, Marc Culler, Henri Gillet, Darren Long, Peter Shalen. Plane curves associated to character varieties of $3$-manifolds. \emph{Invent. Math.} \textbf{118} (1994), no. 1, 47--84.

\bibitem[CELN]{CELN} Kai Cieliebak, Tobias Ekholm, Janko Latschev, Lenhard Ng. Knot contact homology, string topology, and the cord algebra. \emph{J. \'Ec. polytech. Math.} 4 (2017), 661--780. 

\bibitem[CM]{CM} Roger Casals, Emmy Murphy. Differential algebra of cubic planar graphs. With an appendix by Kevin Sackel. \emph{Adv. Math.} 338 (2018), 401--446.

\bibitem[CS]{CS} Marc Culler, Peter Shalen. Varieties of group representations and splittings of $3$-manifolds. \emph{Ann. of Math. (2).}\textbf{117}, (1983) no.1, 109--146.

\bibitem[DG]{DG} Nathan Dunfield, Stavros Garoufalidis. Non-triviality of the $A$-polynomial for knots in $S^3$. \emph{Algebr. Geom. Topol.} 4 (2004), 1145--1153.

\bibitem[El]{El} Yakov Eliashberg. Invariants in contact topology. Proceedings of the International Congress of Mathematicians, Vol. II (Berlin, 1998). \emph{Doc. Math.} \textbf{1998,} Extra Vol. II, 327--338.

\bibitem[EGH]{EGH} Yakov Eliashberg, Alexander Givental, Helmut Hofer. Introduction to symplectic field theory. GAFA 2000 (Tel Aviv, 1999). \emph{Geom. Funct. Anal.} \textbf{2000,} Special Volume, Part II, 560--673.

\bibitem[ENS]{ENS} Tobias Ekholm, Lenhard Ng, Vivek Shende. A complete knot invariant from contact homology. \emph{Invent. Math.} \textbf{211} (2018), no. 3, 1149--1200.

\bibitem[EENS]{EENS}  Tobias Ekholm, John Etnyre, Lenhard Ng, Michael Sullivan. Knot contact homology. \emph{Geom. Topol.} \textbf{17} (2013), no. 2, 975--1112.

\bibitem[Ga1]{Ga1} Honghao Gao. Radon transform for sheaves. arXiv: 1712.06453.

\bibitem[Ga2]{Ga2} Honghao Gao. Simple sheaves for knot conormals. \emph{J. Sympl. Geom.} \textbf{18} (2020), no. 4, 1027--1070.

\bibitem[GKS]{GKS} St\'ephane Guillermou, Masaki Kashiwara, Pierre Schapira. Sheaf quantization of Hamiltonian isotopies and applications to nondisplaceability problems. \emph{Duke Math. J.} \textbf{161} (2012), no. 2, 201--245. 

\bibitem[GTZ]{GTZ} Stavros Garoufalidis, Dylan Thurston, Christian Zickert. The complex volume of $\mathrm{SL}(n,\mathbb{C})$-representations of $3$-manifolds. \emph{Duke Math. J.} \textbf{164} (2015), no. 11, 2099--2160.

\bibitem[GW]{GW} Antonin Guilloux, Pierre Will. On $\mathrm{SL}(3,\mathbb{C})$-representations of the Whitehead link group. \emph{Geom. Dedicata} 202 (2019), 81--101.

\bibitem[HMP]{HMP} Michael Heusener, Vicente Mu\~noz, Joan Porti. The $\mathrm{SL}(3,\mathbb{C})$-character variety of the figure eight knot. \emph{Illinois J. Math.} \textbf{60} (2016), no. 1, 55--98.

\bibitem[KM]{KM} Peter Kronheimer, Tomasz Mrowka. Dehn Surgery, the fundamental group and $SU(2)$. \emph{Math. Res. Lett.} \textbf{11} (2004), no. 5-6, 741--754.

\bibitem[KS]{KS} Masaki Kashiwara, Pierre Schapira. Sheaves on manifolds. With a chapter in French by Christian Houzel. Corrected reprint of the 1990 original. Grundlehren der Mathematischen Wissenschaften [Fundamental Principles of Mathematical Sciences], 292. \emph{Springer-Verlag, Berlin,} 1994. {\rm x}+512 pp.

\bibitem[Mi]{Mi} Kirill Mishachev. The $N$-copy of a topologically trivial Legendrian knot. \emph{J. Sympl. Geom.} \textbf{1} (2003), no. 4, 659--682.

\bibitem[MP]{MP} Vicente Mu\~noz, Joan Porti. Geometry of the $\mathrm{SL}(3,\bbC)$-character variety of torus knots. \emph{Algebr. Geom. Topol.} \textbf{16} (2016), no. 1, 397--426.

\bibitem[Na]{Na} David Nadler. Microlocal branes are constructible sheave. \emph{Selecta Math. (N.S.)} \textbf{15} (2009), no. 4, 563--619.

\bibitem[Ng1]{Ng1}  Lenhard Ng. Knot and braid invariants from contact homology. I. \emph{Geom. Topol.} 9 (2005), 247--297.

\bibitem[Ng2]{Ng2} Lenhard Ng. Knot and braid invariants from contact homology. II. With an appendix by the author and Siddhartha Gadgil. \emph{Geom. Topol.} 9 (2005), 1603--1637.

\bibitem[Ng3]{Ng3} Lenhard Ng. Framed knot contact homology. \emph{Duke Math. J.} \textbf{141} (2008), no. 2, 365--406.

\bibitem[Ng4]{Ng4} Lenhard Ng. A topological introduction to knot contact homology. \emph{Contact and symplectic topology,} 485--530, Bolyai Soc. Math. Stud., 26, \emph{János Bolyai Math. Soc., Budapest,} 2014.

\bibitem[NaZa]{NZ} David Nadler, Eric Zaslow. Constructible sheaves and the Fukaya category. \emph{J. Amer. Math. Soc.} \textbf{22} (2009), no. 1, 233--286.

\bibitem[NiZh]{NiZh} Yi Ni, Xingru Zhang. Detection of knots and a cabling formula for $A$-polynomials. \emph{Algebr. Geom. Topol.} \textbf{17} (2017), no. 1, 65--109.

\bibitem[NRSSZ]{NRSSZ} Lenhard Ng, Dan Rutherford, Vivek Shende, Steven Sivek, Eric Zaslow. Augmentations are sheaves. arXiv:1502.04939.

\bibitem[Ro]{Ro} Dale Rolfsen. Knots and links. Corrected reprint of the 1976 original. Mathematics Lecture Series, 7. \emph{Publish or Perish, Inc., Houston, TX,} 1990. {\rm xiv}+439 pp.

\bibitem[STZ]{STZ} Vivek Shende, David Treumann, Eric Zaslow. Legendrian knots and constructible sheaves. \emph{Invent. Math.} \textbf{207} (2017), no. 3, 1031--1133.

\bibitem[Vo]{Vo} Pierre Vogel. Representation of links by braids: a new algorithm. \emph{Comment. Math. Helv.} \textbf{65} (1990), no. 1, 104--113.

\bibitem[W]{W} Friedhelm Waldhausen. On irreducible $3$-manifolds which are sufficiently large. \emph{Ann. of Math. (2)} 87 (1968), 56--88.

\bibitem[Ya]{Ya} Shuji, Yamada. The minimal number of Seifert circles equals the braid index of a link. \emph{Invent. Math.} \textbf{89} (1987), no. 2, 347--356.

\end{thebibliography}
\end{document}